\newcommand{\rank}{\hbox{rk}}
\newcommand{\cR}{\ensuremath{\mathcal{R}}}
\newcommand{\cK}{\ensuremath{\mathcal{K}}}
\newcommand{\smb}{\left[\begin{smallmatrix}}
\newcommand{\sme}{\end{smallmatrix}\right]}
\newcommand\R{\ensuremath{\mathbb{R}}}
\title{Low-rank updates and  divide-and-conquer methods for quadratic matrix equations}
\author{%
	Daniel Kressner\thanks{EPF Lausanne, Switzerland,
		{\tt daniel.kressner@epfl.ch}} 
	\and
	Patrick K\"urschner\thanks{KU Leuven, Electrical Engineering (ESAT), Kulak Kortrijk Campus, Belgium, {\tt patrick.kurschner@kuleuven.be}}
	%\thanks{Max Planck Institute for Dynamics of Complex Technical Systems, Magdeburg, Germany, 
		%{\tt kuerschner@mpi-magdeburg.mpg.de}}		
	\and
	Stefano Massei\thanks{EPF Lausanne, Switzerland,
		{\tt stefano.massei@epfl.ch}} 
}
\date{}
\pgfplotsset{compat=1.9}
\DeclarePairedDelimiter{\norm}{\lVert}{\rVert}
\DeclarePairedDelimiter{\abs}{\lvert}{\rvert}
\DeclareMathOperator{\diag}{diag}
\DeclareMathOperator{\tridiag}{tridiag}
\renewcommand{\leq}{\leqslant}
\renewcommand{\geq}{\geqslant}
\newtheorem{proposition}{Proposition}[section]
\newtheorem{lemma}[proposition]{Lemma}
\newtheorem{theorem}[proposition]{Theorem}
\newtheorem{definition}[proposition]{Definition}
\newtheorem{example}[proposition]{Example}
\begin{document}
	\maketitle
	\begin{abstract}
		In this work, we consider two types of large-scale quadratic matrix equations: Continuous-time algebraic Riccati equations, which play a central role in optimal and robust control, and unilateral quadratic matrix equations, which arise from stochastic processes on 2D lattices and vibrating systems. We propose a simple and fast way to update the solution to such  matrix equations under low-rank modifications of the coefficients. Based on this procedure, we develop a divide-and-conquer method for quadratic matrix equations with coefficients that feature a specific type of hierarchical low-rank structure, which includes banded matrices. This generalizes earlier work on linear matrix equations. Numerical experiments indicate the advantages of our newly proposed method versus iterative schemes combined with hierarchical low-rank arithmetic.
	\end{abstract}
	
	\section{Introduction}\label{sec:intro}
	
	This paper is concerned with numerical algorithms for treating two types of quadratic matrix equations with large-scale, data-sparse coefficients.
	
	\begin{paragraph}{\bf Type 1: CARE.}
		A \emph{continuous-time algebraic Riccati equation (CARE)} takes the form
		\begin{equation}\label{eq:care}
		A^* X E + E^* X A - E^*XFXE + Q = 0,
		\end{equation}
		where $A,E,F,Q$ are real $n\times n$ matrices, such that $E$ is invertible and $F,Q$ are symmetric positive semi-definite.
		Motivated by its central role in robust and optimal control~\cite{Rus79,Lancaster1995,Sima1996,Loc01,Benner2004}, this class of equations has been widely studied in the literature; see, e.g.,~\cite{BinIM12,Benner2013,BenBKetal18a}.  A solution $X$ to~\eqref{eq:care} is called stabilizing if the so called closed-loop matrix $A-FXE$ is stable, that is, all its eigenvalues are contained in the open left half plane. 
		Mild conditions on the coefficients (see, e.g.,~\cite[Sec. 2.2.2]{BinIM12}) ensure the existence, uniqueness, and symmetric positive semi-definiteness of such a stabilizing solution $X$.
		
		We consider the case when $n$ is large and $F$ has low rank, that is, $F=BB^*$ for some matrix $B\in\mathbb R^{n\times m}$ with $m\ll n$. This is a common assumption in linear-quadratic optimal control problems, where $m$ corresponds to the number of inputs~\cite{BenLP08,Benner2013}. However, we do not impose low rank on $Q$, which allows for having a large number of outputs in control problems, e.g., when observing the state directly. To simplify the exposition, we will focus the discussion mostly on the case $E=I$; the extension to general invertible $E$ will be explained in Section~\ref{sec:gCARE}. 
		
		For $F=0$, the equation~\eqref{eq:care} becomes linear and is called Lyapunov equation. A low-rank updating procedure for such linear matrix equations has been proposed recently in~\cite{kressner2017low}. In this work, we extend this procedure to CARE. More specifically, assuming that $X_0$ satisfies a reference CARE
		\begin{equation}\label{eq:ref-care}
		A_0^* X_0  +  X_0 A_0 - X_0F_0X_0 + Q_0 = 0,
		\end{equation}  
		we aim at computing a correction $\delta X$ such that $X:=X_0+\delta X$ solves the modified CARE
		\begin{equation}\label{eq:pert-care}
		\begin{split}
		A^* X + X A - X F X + Q = 0,
		\end{split}
		\end{equation}
		with
		\[
		A:= A_0+\delta A, \quad F:= F_0+\delta F, \quad Q:= Q_0+\delta Q, \quad \text{with $\delta A, \delta F$, $\delta Q$ of low rank.}
		\]
		Subtracting~\eqref{eq:ref-care} from~\eqref{eq:pert-care} yields
		\begin{equation}\label{eq:cor-care}
		\begin{split}
		( A- FX_0)^* \delta X  + \delta X (A- FX_0) - \delta XF\delta X +  \widehat Q= 0.
		\end{split}
		\end{equation}
		The modified constant term $\widehat Q:= \delta Q +\delta A^*X_0
		+ X_0\delta A- X_0\delta FX_0$ satisfies \[ \rank(\widehat Q) \le  \rank(\delta Q) + 2\rank(\delta A)+\rank(\delta F), \]
		where $\rank(\cdot)$ denotes the rank of a matrix. Hence, independently of the rank of $Q_0$, the constant term of the CARE~\eqref{eq:cor-care} is guaranteed to have low rank.
		%Notice that, in order to retrieve these settings, the low rank assumption on the matrix $F_0$ is not avoidable.
		Note that most algorithms for large-scale Riccati equations~\cite{BenLP08,Benner2013,simoncini2013two,Sim16,BenBKetal18} assume the constant term to be of low rank which, in turn, may render them unsuitable for solving~\eqref{eq:pert-care}. In contrast, the formulation~\eqref{eq:cor-care} is well suited for such methods, returning an approximation of $\delta X$ in the form of a symmetric low-rank factorization.
		%If, in addition, the matrix $A$ linear coefficients of \eqref{eq:cor-care} allow for efficient matrix-vector multiplication and solution of --- possibly shifted --- linear systems, then approximating $\delta X$ is feasible for large dimension $n$.
	\end{paragraph}
	
	\begin{paragraph}{\bf Type 2: UQME.}
		A \emph{unilateral quadratic matrix equation (UQME)} takes the form
		\begin{equation}\label{eq:uqme}
		AX^2+BX+C=0,
		\end{equation}
		with $A,B,C\in\mathbb R^{n\times n}$. The spectrum of a solution to~\eqref{eq:uqme}
		%--- whether they exist ---
		corresponds to a subset of the $2n$ eigenvalues of the matrix polynomial
		\begin{equation}\label{eq:phi}
		\varphi(\lambda):= 
		\lambda^2A+\lambda B + C.
		\end{equation}
		Instances of equation~\eqref{eq:uqme} arise in overdamped systems in structural mechanics \cite{GuoHyp} and are at the core of the matrix analytic method for \emph{quasi-birth--death} (QBD) stochastic processes \cite{BiniMarkov}.
		
		A typical situation in applications is that the eigenvalues of $\varphi(\lambda)$ are separated by the unit circle into two subsets of cardinality $n$:
		\begin{equation}\label{eq:split}
		|\lambda_1|\leq \dots |\lambda_n|\leq 1\leq |\lambda_{n+1}|\leq\dots\leq|\lambda_{2n}|,\qquad |\lambda_n|<|\lambda_{n+1}|,
		\end{equation}
		and it is of interest to compute the \emph{minimal solution} of~\eqref{eq:uqme}, that is, the solution $X$ associated with $\lambda_1,\dots,\lambda_n$. Note that some of the eigenvalues are allowed to be infinite.

		\eqref{eq:split} implies  that the matrix $X$ is the only power bounded solution of \eqref{eq:uqme}; this uniquely identify the \emph{matrix-geometric property} \cite{BiniMarkov} of certain QBD processes. \eqref{eq:split} also guarantees the quadratic convergence of the \emph{cyclic reduction} algorithm for computing $X$ \cite[Theorem 9]{Bini2009}. The minimal solution  can be constructed as $X=V\diag(\lambda_1,\dots,\lambda_n)V^{-1}$ if the matrix $V$ containing the eigenvectors associated with $\lambda_1,\dots,\lambda_n$ is invertible \cite{Higham2000b}. 
		This property is usually met in practice and in the QBD setting  it can be ensured via the probabilistic interpretation of the minimal solution \cite[Section 6.2]{Latouche}. 
		
		We assume that the minimal solution exists and that \eqref{eq:split} holds for a reference equation
		\begin{equation}\label{eq:ref-uqme}
		A_0 X_0^2  +  B_0X_0 + C_0 = 0,
		\end{equation}  
		as well as for the modified equation
		\begin{equation}\label{eq:pert-uqme}\begin{split}
		(A_0+\delta A) (X_0+\delta X)^2  +  (B_0+\delta B)(X_0+\delta X)  
		+ (C_0+\delta C) = 0,
		\end{split}
		\end{equation}
		where $\delta A,\delta B$ and $\delta C$ are given  low-rank matrices.
		%Once again, we aim to compute $\delta X$ which express the difference between the minimal solutions of \eqref{eq:pert-uqme} and \eqref{eq:ref-uqme}.
		
		Denoting $A:=A_0+\delta A,B:=B_0+\delta B,C:=C_0+\delta C$ and subtracting   \eqref{eq:ref-uqme} from \eqref{eq:pert-uqme} yields the following equation for the correction $\delta X$:
		\begin{equation}\label{eq:cor-uqme}
		A\delta X^2 + (AX_0+B)\delta X + A\delta X X_0+\widehat C=0 ,
		\end{equation}
		where $\widehat C:=\delta A X_0^2+\delta BX_0+\delta C$ has rank bounded by $\rank(\delta A)+\rank(\delta B)+\rank(\delta C)$.
		Note that equation~\eqref{eq:cor-uqme}  is not a UQME. Nevertheless, as will be seen in Section~\ref{sec:correction}, there is still a  correspondence between the solutions of~\eqref{eq:cor-uqme} and an appropriately chosen eigenvalue problem. Similarly as for CARE, the low rank of $\widehat C$ will allow us to devise an efficient numerical method for~\eqref{eq:cor-uqme}.
		%This shed some lights on the low-rank approximability of $\delta X$.
		%The computation of $\delta X$ is addressed by means of a projection method that we propose in Section~\ref{sec:update}.
	\end{paragraph}
	
	\begin{paragraph}{\bf Quadratic matrix equations with hierarchical low-rank structure.}
		In the second part of the paper we focus on quadratic  equations with coefficients that feature hierarchically low-rank structure. More specifically, the coefficients of a CARE~\eqref{eq:care} or a UQME~\eqref{eq:uqme} are assumed to be \emph{hierarchically off-diagonal low-rank (HODLR) matrices}\cite{Ambikasaran2013,Hackbusch2015}. This framework aligns well with the low-rank updates discussed above, because HODLR matrices are block diagonalized by a low-rank perturbation and, in turn, the corresponding reference equations~\eqref{eq:ref-care} and~\eqref{eq:ref-uqme} decouple into two equations of smaller size. 
		Applying this idea recursively results in a divide-and-conquer method for solving UQMEs with HODLR coefficients and CAREs with a low-rank  quadratic term and all other coefficients in the HODLR format. 
		
		Existing fast algorithms that address such (and more general) scenarios are based on combining a matrix iteration with fast arithmetic in hierarchical low-rank format. For CAREs, a combination of the sign function iteration with hierarchical matrices has been proposed in~\cite{Baur2006,Grasedyck2003a}. For UQMEs, a combination based on cyclic reduction has been proposed in~\cite{Bini2016,Bini2017}. As pointed out in~\cite{kressner2017low}, a disadvantage of these strategies is that they exploit the structure only indirectly and rely on repeated recompression during the iteration, which may constitute a computational bottleneck.
	\end{paragraph}
	
	\begin{paragraph}{\bf Outline.}
		
		The rest of this paper is organized as follows. In Section~\ref{sec:correction}, we study the correction equations~\eqref{eq:cor-care} and~\eqref{eq:cor-uqme}, with a particular focus on providing intuition why one can expect their solutions to admit good low-rank approximations.  Section~\ref{sec:update} is concerned with numerical methods for obtaining such low-rank approximations. While a variety of large-scale solution methods have been recently developed for~\eqref{eq:cor-care}, the equation~\eqref{eq:cor-uqme} is non-standard and requires the development of a novel large-scale solver, which may be of independent interest. Section~\ref{sec:dac} utilizes these solvers to derive divide-and-conquer methods for CARE and UQME featuring HODLR matrix coefficients. Finally, Section~\ref{sec:numerical} highlights several applications of these divide-and-conquer methods and provides numerical evidence of their effectiveness.
		
	\end{paragraph}

	\section{Analysis of the correction equations}\label{sec:correction}
	The purpose of this section is to study properties of the correction matrix $\delta X$, which satisfies one of the two correction equations,~\eqref{eq:cor-care} or~\eqref{eq:cor-uqme}. 
	
	%Concerning its low-rank approximability, we say that $\delta X$ admits an \emph{$\epsilon$ approximation of rank $k$} if there exists a matrix $Y$ of rank at most $k$ such that $\norm{\delta X-Y}_2\le \epsilon$. This is equivalent to say that the $(k+1)$-th largest singular value of $\delta X$ is less than $\epsilon$. 
	
	\subsection{Existence and low-rank approximability}\label{sec:low-rank}
	
	A necessary requirement of most solvers for large-scale matrix equations to perform well is that the solution admits good low-rank approximations.
	This property can sometimes be verified a priori by showing that the singular values exhibit a strong decay. In the following we first recall such results for linear matrix equations and then use them to shed some insight on the low-rank approximability of $\delta X$. 
	\begin{paragraph}{Singular value decay for linear matrix equations}
		Let us consider the so called \emph{Sylvester equation} % \cite{Simoncini2016}
		\[
		AX+XB=Q
		\]
		with the coefficients $A,B,Q\in\mathbb R^{n\times n}$ such that the spectra of $A$ and $-B$ are disjoint, and $Q$ has rank $k \ll n$.
		Moreover, let $\mathcal R_{h,h}$ denotes the set of rational functions with numerator and denominator degrees at most $h$. 
		
		In \cite{Beckermann2016}, it is shown that for every $r\in\mathcal R_{h,h}$ there exists a  matrix $\widetilde X$ of rank at most $kh$ such that $X-\widetilde X=r(A)Xr(-B)^{-1}$, provided that the right-hand side is well defined. Using that the $(kh+1)$th singular value, denoted by $\sigma_{kh+1}(\cdot)$, governs the $2$-norm error of the best approximation by a matrix of rank at most $kh$, one obtains
		\[
		\sigma_{kh+1}(X)\le \norm{r(A)}_2\norm{r(-B)^{-1}}_2\norm{X}_2.
		\]
		Combined with norm  estimates for rational matrix functions, this leads to the following theorem.
		\begin{theorem}[Theorem 2.1 in \cite{Beckermann2016}]\label{thm:zol}
			Consider the Sylvester equation $AX+XB=Q$, with $Q$ of rank $k$, and let 
			$E$ and $F$ be disjoint compact sets in the complex plane. 
			\begin{itemize}
				\item[$(i)$] If $E,F$ contain
				the numerical ranges of $A$ and $-B$, respectively, then
				\[
				\frac{\sigma_{kh+1}(X)}{\norm{ X}_2}\leq  K_C\,\min_{r\in\mathcal R_{h,h}}\frac{\max_E \abs{r(z)}}{\min_F \abs{r(z)}},
				\]
				where $K_C = 1$ if $A,B$ are normal matrices and $1\le K_C \le (1+\sqrt{2})^2$ otherwise.
				\item[$(ii)$] If $A,B$ are diagonalizable and $E,F$ contain
				the spectra of $A$ and $-B$, respectively, then
				\[
				\frac{\sigma_{kh+1}(X)}{\norm{ X}_2}\leq  \kappa_{\mathsf{eig}}(A)\kappa_{\mathsf{eig}}(B)\,\min_{r\in\mathcal R_{h,h}}\frac{\max_E \abs{r(z)}}{\min_F \abs{r(z)}}
				\]
				where $\kappa_{\mathsf{eig}}(\cdot)$ denotes the $2$-norm condition number of the eigenvector matrix.
			\end{itemize}
		\end{theorem}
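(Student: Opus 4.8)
The plan is to build directly on the two facts recorded just before the statement: for every $r\in\mathcal R_{h,h}$ there is a matrix $\widetilde X$ of rank at most $kh$ with $X-\widetilde X=r(A)\,X\,r(-B)^{-1}$, and consequently $\sigma_{kh+1}(X)\le\norm{r(A)}_2\,\norm{r(-B)^{-1}}_2\,\norm{X}_2$ whenever the right-hand side is defined. (Uniqueness of $X$ itself is not an issue, since $\Lambda(A)$ and $\Lambda(-B)$ lie in the disjoint sets $E$ and $F$.) Taking the infimum over admissible $r$, the whole theorem reduces to the scalar estimate
\[
\norm{r(A)}_2\,\norm{r(-B)^{-1}}_2\ \le\ K\cdot\frac{\max_E\abs{r}}{\min_F\abs{r}},
\]
with $K$ the constant claimed in $(i)$ or $(ii)$. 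Before proving this I would normalize: if $r$ has a pole on $E$ or a zero on $F$, the right-hand side is $+\infty$ and there is nothing to prove; otherwise, since $E,F$ are compact, $r$ is analytic on a neighbourhood of $E$ and $1/r$ on a neighbourhood of $F$, and $r(A)$, $r(-B)^{-1}$ are well defined because $\Lambda(A)\subseteq E$ and $\Lambda(-B)\subseteq F$. Note also $\norm{r(-B)^{-1}}_2=\norm{(1/r)(-B)}_2$, which is the form used below.

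For part $(ii)$ (and the normal sub-case of $(i)$) I would diagonalize. Writing $A=V_A\Lambda_A V_A^{-1}$ and $-B=V_B\Lambda_B V_B^{-1}$ gives $r(A)=V_A\,r(\Lambda_A)\,V_A^{-1}$, hence
\[
\norm{r(A)}_2\ \le\ \kappa_{\mathsf{eig}}(A)\,\max_{\lambda\in\Lambda(A)}\abs{r(\lambda)}\ \le\ \kappa_{\mathsf{eig}}(A)\,\max_E\abs{r},
\]
and likewise $\norm{(1/r)(-B)}_2\le\kappa_{\mathsf{eig}}(-B)\,(\min_F\abs{r})^{-1}=\kappa_{\mathsf{eig}}(B)\,(\min_F\abs{r})^{-1}$, using that $-B$ has the same eigenvectors as $B$. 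Multiplying the two bounds and inserting them into the singular-value estimate proves $(ii)$. When $A$ and $B$ are normal one may take $V_A,V_B$ unitary, so both condition numbers equal $1$; moreover $r(A)$ is then normal and $\norm{r(A)}_2=\max_{\Lambda(A)}\abs{r}\le\max_E\abs{r}$ exactly, so $K_C=1$, which handles the normal case of $(i)$.

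For the general case of $(i)$ the right tool is the Crouzeix--Palencia theorem: for every square matrix $M$ and every function $g$ analytic on a neighbourhood of the numerical range $W(M)$ one has $\norm{g(M)}_2\le(1+\sqrt2)\sup_{z\in W(M)}\abs{g(z)}$. I would apply it with $g=r$, $M=A$ to get $\norm{r(A)}_2\le(1+\sqrt2)\max_{W(A)}\abs{r}\le(1+\sqrt2)\max_E\abs{r}$, and with $g=1/r$, $M=-B$ to get $\norm{(1/r)(-B)}_2\le(1+\sqrt2)(\min_{W(-B)}\abs{r})^{-1}\le(1+\sqrt2)(\min_F\abs{r})^{-1}$; the normalization above guarantees the required analyticity since $W(A)\subseteq E$ and $W(-B)\subseteq F$. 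Multiplying gives the constant $K_C\le(1+\sqrt2)^2$, completing $(i)$.

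The main obstacle is the last step: the Crouzeix--Palencia bound is a deep result and there is no elementary replacement that yields a comparably small, dimension-independent constant in terms of the numerical range alone — this is precisely why the factor $(1+\sqrt2)^2$ (rather than something sharper) appears. Everything else is essentially bookkeeping: the rank-$kh$ bound on $\widetilde X$ and the identity for $X-\widetilde X$ are quoted from \cite{Beckermann2016}, and the pole/zero normalization plus the diagonalization estimates are routine. A minor point worth stating carefully is that passing to the infimum over $r\in\mathcal R_{h,h}$ is harmless, because the bound is monotone and the excluded $r$ (those with a pole on $E$ or a zero on $F$) contribute $+\infty$ on the right-hand side.
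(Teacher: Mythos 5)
Your proposal is correct and follows essentially the same route the paper sketches in the paragraph preceding the theorem (which is itself quoted from \cite{Beckermann2016} rather than proved): the rank-$kh$ approximant with $X-\widetilde X=r(A)Xr(-B)^{-1}$, the resulting bound $\sigma_{kh+1}(X)\le\norm{r(A)}_2\norm{r(-B)^{-1}}_2\norm{X}_2$, and then ``norm estimates for rational matrix functions,'' which you correctly identify as diagonalization for part $(ii)$ and the Crouzeix--Palencia bound $\norm{g(M)}_2\le(1+\sqrt{2})\sup_{W(M)}\abs{g}$ for part $(i)$. The handling of the degenerate cases (poles on $E$, zeros on $F$) and the identity $\norm{r(-B)^{-1}}_2=\norm{(1/r)(-B)}_2$ are the right bookkeeping steps.
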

		The quantities $Z_h(E,F):=\min_{r\in\mathcal R_{h,h}}\frac{\max_E \abs{r(z)}}{\min_F \abs{r(z)}}$ are known in the literature as \emph{Zolotarev numbers}. When $E$ and $F$ are well separated one can expect that $Z_h(E,F)$ decreases rapidly, as $h$ increases, and quickly  reaches the level of machine precision. Explicit bounds showing exponential decay have been established for various configurations of $E$ and $F$, including disjoint real intervals and circles~\cite{Beckermann2016,Starke1992}.
		% For more general sets, Ganelius in \cite{Ganelius80} showed the inequality
		%\begin{equation}\label{eq:ganelius}
		%Z_h(E,F)\leq \gamma \rho^{-h},\qquad  \rho:=\exp\left(\frac{1}{\capac(E,F)}\right),
		%\end{equation}
		%where the constant $\gamma$ depends only on the geometry of $E$ and $F$ and $\capac(E,F)$ denotes the so called \emph{logarithmic capacity} of the condenser with plates $E$ and $F$.
	\end{paragraph}
	
	\begin{paragraph}{\bf CARE.}
		The existence and uniqueness of a stabilizing solution to the correction equation~\eqref{eq:cor-care} follows immediately from the observation that the closed-loop matrices of CARE~\eqref{eq:pert-care} and CARE~\eqref{eq:cor-care} are identical:
		\[(A-FX_0)-F\delta X=A-FX.\]
		%Hence, if $X$ is a stabilizing solution of~\eqref{eq:pert-care} then it corresponds to the closed loop matrix associated with $X$, that by assumption is stable. This ensures that $\delta X$ is a stabilizing solution, independently on the stabilizing property of $X_0$.
		This yields the following lemma. %, which does not impose a stabilizing property on $X_0$
		\begin{lemma}
			Let $X_0$ be a solution of~\eqref{eq:ref-care}. Then the correction equation~\eqref{eq:cor-care} has a unique stabilizing solution $\delta X$ if and only if the modified equation~\eqref{eq:pert-care} has a unique stabilizing solution $X$.  
		\end{lemma}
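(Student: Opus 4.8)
The plan is to exploit the one-to-one correspondence between the solutions of the modified CARE~\eqref{eq:pert-care} and those of the correction equation~\eqref{eq:cor-care}, together with the already-noted fact that this correspondence leaves the closed-loop matrix unchanged.

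First I would record that the affine map $X \mapsto \delta X := X - X_0$ is a bijection from the solution set of~\eqref{eq:pert-care} onto the solution set of~\eqref{eq:cor-care}. Indeed, since $X_0$ solves the reference equation~\eqref{eq:ref-care} by hypothesis, subtracting~\eqref{eq:ref-care} from~\eqref{eq:pert-care} and using $\delta A = A - A_0$, $\delta F = F - F_0$, $\delta Q = Q - Q_0$ produces exactly~\eqref{eq:cor-care}; conversely, adding~\eqref{eq:ref-care} back to~\eqref{eq:cor-care} recovers~\eqref{eq:pert-care}. Every step in this manipulation is reversible, so $X$ solves~\eqref{eq:pert-care} if and only if $\delta X = X - X_0$ solves~\eqref{eq:cor-care}, and fixing $X_0$ makes the correspondence bijective.

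Next I would observe that this bijection preserves the stabilizing property. The closed-loop matrix of~\eqref{eq:pert-care} at a solution $X$ is $A - FX$ (recall that we take $E = I$ here), while the CARE~\eqref{eq:cor-care} has coefficient matrices $A - FX_0$, $F$, and $\widehat Q$, so its closed-loop matrix at a solution $\delta X$ is $(A - FX_0) - F\delta X$. Substituting $X = X_0 + \delta X$ shows these two matrices are literally equal, hence one is stable precisely when the other is. Consequently $\delta X$ is a stabilizing solution of~\eqref{eq:cor-care} if and only if $X = X_0 + \delta X$ is a stabilizing solution of~\eqref{eq:pert-care}; in particular, existence of a stabilizing solution on one side is equivalent to existence on the other, and if one equation had two distinct stabilizing solutions, applying the bijection would yield two distinct stabilizing solutions of the other. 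This gives the claimed equivalence of "unique stabilizing solution" in both directions.

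There is no real obstacle here: the only point requiring (routine) care is verifying that the passage between~\eqref{eq:pert-care} and~\eqref{eq:cor-care} is a genuine equivalence rather than a one-way implication, so that the full solution sets — not merely some solutions — are in bijection; once that is in place, the statement about the stabilizing solution is immediate from the coincidence of the closed-loop matrices. For general invertible $E$ the same argument applies verbatim, with $A - FX$ replaced by $A - FXE$ throughout, as will be explained in Section~\ref{sec:gCARE}.
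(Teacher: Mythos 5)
Your argument is correct and is precisely the one the paper uses: the bijection $X \mapsto X - X_0$ between the solution sets of~\eqref{eq:pert-care} and~\eqref{eq:cor-care}, combined with the identity $(A-FX_0)-F\delta X = A-FX$ of the closed-loop matrices. The paper states this even more tersely, deriving the lemma "immediately" from that single observation, so your write-up is simply a more explicit version of the same proof.
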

		
		To study the low-rank approximability of $\delta X$, let us first assume that $A$ is stable. By 
		rearranging~\eqref{eq:cor-care}, we get
		\[
		A^* \delta X  + \delta XA = -\widehat Q + \delta XF(\delta X  + X_0)+ X_0F\delta X.
		\]
		Hence, $\delta X$ satisfies a Lyapunov equation with the rank of the right-hand side bounded by $2\rank(F)+\rank(\widehat Q)$. If, additionally, the numerical range of $A$ is contained in the open left half plane then the first part of Theorem~\ref{thm:zol} can be applied to yield singular value bounds for $\delta X$. Alternatively, the second part can be applied under the milder assumption that $A$ is diagonalizable.
		
		If $A$ is not stable, we rearrange \eqref{eq:cor-care} as
		\[
		(A-FX)^*\delta X + \delta X(A-FX)=-\widehat Q-\delta XF\delta X. 
		\] 
		As the closed loop matrix $A-FX$ is stable and the rank of the right-hand side is bounded by $\rank(F)+\rank(\widehat Q)$, Theorem~\ref{thm:zol} applies under the assumptions stated above with $A$ replaced by $A-FX$.
		One should note, however, that the obtained bounds are somewhat implicit because they involves the numerical range or the eigenvector conditioning of $A-FX$, quantities that are hard to estimate a priori. If more information is available for the closed loop matrix $A-F\widetilde X$ associated with a stabilizing initial guess $\widetilde X$, one can instead work with the equation
		\[
		(A-F\widetilde X)^*\delta X + \delta X(A-F\widetilde X)=-\widehat Q+\delta XF\delta X-2\delta \widetilde XF\delta \widetilde X,
		\] 
		where $\delta\widetilde X:= \widetilde X-X_0$.
		%and --- by applying Theorem~\ref{thm:zol} --- an explicit bound for the singular values of $\delta X$.
		%It is worth noticing that assuming the stabilizing property of $X_0$ does not play any role in this study of the low-rank approximability of $\delta X$.
	\end{paragraph}
	\begin{paragraph}{\bf UQME.}
		Solutions of the correction equation~\eqref{eq:cor-uqme} are intimately related to the matrix pencil
		\begin{equation}\label{eq:pencil}
		\begin{bmatrix}
		X_0&I\\
		-\widehat C&-(AX_0+B)
		\end{bmatrix}
		-\lambda\begin{bmatrix}
		I & 0 \\0 &A
		\end{bmatrix}.
		\end{equation}
		In fact, a direct computation shows that  $\delta X$ solves \eqref{eq:cor-uqme} if and only if
		\begin{equation}\label{eq:deflating}
		\begin{bmatrix}
		X_0&I\\
		-\widehat C&-(AX_0+B)
		\end{bmatrix}\begin{bmatrix}
		I \\
		\delta X
		\end{bmatrix}=
		\begin{bmatrix}
		I & 0\\ 0 & A
		\end{bmatrix}
		\begin{bmatrix}
		I\\
		\delta X
		\end{bmatrix}X.
		\end{equation}
		For simplicity, let us assume that $A$ is invertible. Then the eigenvalues of~\eqref{eq:pencil} coincide with the eigenvalues of the matrix 
		\[\begin{bmatrix}
		X_0&I\\
		-A^{-1}\widehat C& -(X_0+A^{-1}B)
		\end{bmatrix}.
		\]
		By a similarity transformation,
		\begin{equation} \label{eq:similaritytrafo}
		\begin{bmatrix}
		I & 0 \\
		\delta X&I
		\end{bmatrix}^{-1}
		\begin{bmatrix}
		X_0&I\\
		-A^{-1}\widehat C& -(X_0+A^{-1}B)
		\end{bmatrix}\begin{bmatrix}
		I & 0\\
		\delta X&I
		\end{bmatrix}
		=
		\begin{bmatrix}
		X&I\\
		0 &-(X+A^{-1}B)
		\end{bmatrix}.
		\end{equation}
		Because $X = X_0+\delta X$ is a solution of $\eqref{eq:uqme}$, the quadratic matrix polynomial $\varphi(\lambda)$ defined in~\eqref{eq:phi} admits the factorization
		\[
		\lambda^2 A + \lambda B + C = (\lambda A + AX + B ) (\lambda I - X) = A^{-1} (\lambda I + X + A^{-1}B) (\lambda I - X) .
		\]
		Together with~\eqref{eq:similaritytrafo}, this shows that the eigenvalues of the pencil~\eqref{eq:pencil} coincide with the eigenvalues of $\varphi(\lambda)$. In particular, if $X_0$ and $X$ are the minimal solutions of \eqref{eq:ref-uqme} and \eqref{eq:uqme}, respectively, then the spectra of $X_0$ and $-(X+A^{-1}B)$ are separated by the unit circle. By rearranging \eqref{eq:pert-uqme}, $\delta X$ can be viewed as the solution of a Sylvester equation with these coefficients and low-rank right hand side:
		\begin{equation}\label{eq:sylv-uqme-corr}
		(X+A^{-1}B)\delta X + \delta X X_0= -A^{-1}\widehat C.
		\end{equation}
		This indicates good low-rank approximability of $\delta X$.
	\end{paragraph}
	
	\section{Low-rank updates} \label{sec:update}
	
	In Section~\ref{sec:intro} we already described the basic procedure for updating the solution $X_0$ of a reference CARE or UQME.
	This requires solving correction equations of the form~\eqref{eq:cor-care} or \eqref{eq:cor-uqme}, respectively.
	In the following, we discuss how to solve these correction equations efficiently .
	
	\subsection{Projection subspaces}
	
	According to the discussion in Section~\ref{sec:low-rank}, one may expect that the solutions of~\eqref{eq:cor-care} and~\eqref{eq:cor-uqme} admit good low-rank approximations. A common strategy for obtaining such approximate solutions is to project these matrix equations to a pair of subspaces. To be more specific, let $U,V\in\mathbb R^{n\times t}$ contain orthonormal bases of $t$-dimensional subspaces $\mathcal U,\mathcal V\subset \mathbb R^n$. Then we consider approximate solutions of the form $\widetilde X:=UYV^*$, where $Y\in\mathbb R^{t\times t}$ is obtained
	%imposing a Galerkin condition on the residual with respect to the tensorized space  $\mathcal U\otimes\mathcal V$. This corresponds to require $Y$ to verify a
	from solving a compressed matrix equation. % of dimension $t\times t$.
	
	The choice of the projection subspaces $\mathcal U,\mathcal V$ is key to obtaining good approximations. In the context of matrix equations,
	\emph{rational Krylov subspaces}~\cite{Ruhe1998} are a popular and effective choice.
	\begin{definition} Let $A\in\mathbb R^{n\times n}$, $U_0 \in\mathbb R^{n\times k}$, $k<n$, and $\xi\in\mathbb C^{t}$. The vector space 
		\begin{align*}
		\cR\cK_t(A,U_0,\xi):= \mathrm{range}\Big\{\Big[ U_0,(A-\xi_1 I)^{-1}U_0,\ldots,\Big(\prod\limits_{j=1}^{t-1}(A-\xi_j I)^{-1}\Big)U_0\Big] \Big\}
		\end{align*}
		is called \emph{rational Krylov subspace} with respect to $(A,U_0,\xi)$.
	\end{definition}
	A good choice of shift parameters $\xi_j$ is crucial and we will discuss our choices for CARE and UQME below.
	%, separatel, see the discussion~\cite{Benner2013b,DruS11} for the case of linear equations.
	
	\subsection{Low-rank solution of correction equation for CARE}
	
	To describe our approach for approximating the solution of~\eqref{eq:cor-care}, let us define 
	$A_{\mathsf{corr}}:= A- FX_0$ and suppose that the low-rank updates of the coefficients are given in factorized, symmetry-preserving form:
	\[
	\delta A=U_AV_A^*, \quad \delta
	Q=U_QD_QU_Q^*, \quad \delta F=U_FD_FU_F^*,
	\]
	with $U_A,V_A\in\R^{n\times \rank{(\delta A)}}$, $U_Q\in\R^{n\times \rank{(\delta Q)}}$, $U_F\in\R^{n\times \rank{(\delta F)}}$, and symmetric matrices $D_Q \in\R^{\rank{(\delta Q)}\times \rank{(\delta Q)}}$, $D_F \in\R^{\rank{(\delta F)}\times \rank{(\delta F)}}$. This allows us to write the right-hand side of~\eqref{eq:cor-care} in factorized form $\widehat Q= UDU^*$ as well, with 
	\begin{align}\label{rhsCARE}
	U:=[U_Q,V_A,X_0U_A,X_0U_F],
	\quad D=\diag\left(D_Q,\smb 0&I_{\rank{(U_A)}}\\I_{\rank{(U_A)}}&0\sme,D_F\right),
	\end{align}
	where $\diag$ denotes a block diagonal matrix with the blocks determined by the arguments.
	The correction equation~\eqref{eq:cor-care} now reads as
	\begin{equation} \label{eq:corr2}
	A_{\mathsf{corr}}^*\delta X +\delta XA_{\mathsf{corr}}- \delta XF\delta X + \widehat Q= 0.
	\end{equation}

	It is recommended to perform an optional preprocessing step that aims at reducing the rank of $\hat Q$ further.
	For this purpose, we compute a
	thin QR factorization $U  = Q_UR_U$ followed by a (reordered) spectral decomposition
	\[
	R_U D R_U^* = \begin{bmatrix} S_1 & S_2 \end{bmatrix}  \begin{bmatrix} \Lambda_1 & 0 \\ 0 & \Lambda_2 \end{bmatrix} \begin{bmatrix} S_1 & S_2 \end{bmatrix}^*,
	\]
	such that the diagonal matrix $\Lambda_2$ contains all eigenvalues of magnitude smaller than a prescribed tolerance $\tau_\sigma$.
	Discarding these eigenvalues results in the reduced-rank approximation $\hat Q\approx U D U^*$ with 
	$U \gets Q_U S_1$ and $D \gets \Lambda_1$.
	
	For a large-scale CARE of the form~\eqref{eq:corr2}, with both $\widehat Q$ and $F=BB^*$ of low rank, 
	various numerical methods have been proposed~\cite{BenLP08,Benner2013,simoncini2013two,Sim16,BenBKetal18,BenBKetal18a}. In the following, we focus on the rational Krylov subspace method (RKSM)~\cite{simoncini2013two,Sim16}, but other solvers could be used as well. While these algorithms usually assume $\widehat Q$ to be positive semi-definite, their extension to possibly indefinite $\widehat Q$ poses no major obstacle; see also the discussion in~\cite{LanMS15}.
	RKSM constructs an approximate solution of the form $\delta X_t=V_tYV_t^*$, where 
	$V_t$ contains an orthonormal basis of a rational Krylov subspace $\cR\cK_t(A^*_{\mathsf{corr}},U,\xi)$. The small matrix 
	$Y$ is determined via a Galerkin condition, which comes down to solving the compressed CARE
	\begin{align*}
	\tilde A^*_{\mathsf{corr}}Y+Y\tilde A_{\mathsf{corr}}-Y\tilde FY +  \tilde UD\tilde U^*= 0,\quad \tilde A_{\mathsf{corr}}:=V_t^*A_{\mathsf{corr}}V_t,~\tilde F:=V_t^*FV_t,~\tilde U:=V_t^*U.
	\end{align*}
	for a stabilizing solution $Y$ which can be addressed by direct algorithms for small, dense CAREs~\cite{BenBKetal15a}. Note that the indefinite inhomogeneities $\tilde UD\tilde U^*$ are not an issue for the existence of such stabilizing solution which will then be also indefinite, see, e.g.,~\cite{Wil71}. 

	In practice it can happen that the Hamiltonian matrix associated to the compressed CARE has eigenvalue close to the imaginary axis which can result in inaccurate solutions $Y$. For refining the accuracy of $Y$ we apply a defect correction strategy similar to~\cite{Mehrmann1988a} given by (at most) 2 steps of a Newton's method.
	Algorithm~\ref{alg:rksm_care} gives a basic illustration of
	this method. 
	\begin{algorithm}
		\caption{RKSM for~\eqref{eq:cor-care} with $Q = UDU^*$ and $F = BB^*$}
		\label{alg:rksm_care}
		\begin{algorithmic}[1]
			\Procedure {Low\_rank\_CARE}{$A_{\mathsf{corr}}$, $B$, $U$, $D$}
			\State $V_1=v_1=\texttt{orth}(U)$ \Comment{Orthonormalize $U$ by thin QR decomposition}
			\For {$t=1,2,\ldots$}
			\State $\tilde A^*_{\mathsf{corr}}\leftarrow V_t^*A_{\mathsf{corr}}V_t,~\tilde B\leftarrow V_t^*B,~\tilde U\leftarrow V_t^*U$
			\State $Y\leftarrow$\textsc{Dense\_CARE}$(\tilde A_{\mathsf{corr}},~\tilde B,~\tilde UD\tilde U^*)$
			\State {\bf if} converged {\bf then return} $\delta X_t=V_tYV_t^*$ %{\bf end if}
			\State  Obtain next shift $\xi_t$. \label{line:careshift}
			\State Solve $(A_{\mathsf{corr}}-\xi_t I)^*\tilde v=v_t$ for $\tilde v$. \label{line:linsolve} 
			\State $\tilde v\leftarrow \tilde v-V_t(V_t^*\tilde v)$, $v_{t+1}=\texttt{orth}(\tilde v)$, $V_{t+1}=[V_t,~v_{t+1}]$
			\EndFor
			\EndProcedure
		\end{algorithmic}
	\end{algorithm}
	We refer to the relevant literature~\cite{DruS11,simoncini2013two,Sim16} for implementation details and only comment on some critical steps.
	For selecting the shift parameters in line~\ref{line:careshift} we employ the adaptive procedure from~\cite{DruS11,simoncini2013two,Sim16}. This may result in complex shifts or, more precisely, in complex
	conjugate pairs of shifts. The increased cost of working in complex arithmetic can be largely reduced by using an appropriate implementation, see, e.g.,~\cite{Ruh94b},
	which also  returns a real approximation $\delta X_t$. 
	The shifted linear systems in line~\ref{line:careshift} involve the matrix  $(A_{\mathsf{corr}}-\xi_t I)^* = (A - \xi_t I - FX_0)^{-1}$. If $A$ is sparse, such a system can be solved, e.g., by combining a sparse direct solver for $A - \xi_t I$ with the Sherman-Morrison-Woodbury formula to incorporate the low-rank modification $FX_0$. If $A$ is a HODLR matrix then $A - \xi_t I - FX_0$ is a HODLR matrix as well and solvers for HODLR matrices can be used.
	%In our experiments the matrix $A$ is sparse such that the combination of Sherman-Morrison-Woodbury formula and sparse-direct solvers for the systems defined by  $A-\xi_t I$ was substantially faster. 
	Algorithm~\ref{alg:rksm_care} is terminated once the residual is sufficiently small, that is, $\|R_t\|_2=\|A^*_{\mathsf{corr}}\delta X_t+\delta X_t A^*_{\mathsf{corr}}-\delta X_t F\delta X_t +   UD U^*\|_2\leq\tau_{\mathsf{care}}$
	for some prescribed tolerance $\tau_{\mathsf{care}}>0$.  An efficient way of computing the residual norm $\|R_t\|_2$ is described in~\cite{Sim16}. 
	After termination, it is recommended to perform an optional post-processing step, which aims at reducing the rank of $\delta X_t$, analogous to the rank-reducing procedure for $\hat Q$ described above.
	
	%%%%%%%%%%%%%%%%%%%%%%%%%%%%%%%%%%%%%%%%%%%%%%%%%%%%%%%
	\subsubsection{Extension to generalized CAREs} \label{sec:gCARE}
	
	In this section, we briefly discuss the extension of our low-rank update procedure to the generalized CARE (GCARE), see~\eqref{eq:care}.
	The reference and modified equation take the form
	\begin{equation}\label{eq:ref-gcare}
	A_0^* X_0E_0  +  E_0^*X_0 A_0 - E_0^*X_0F_0X_0E_0 + Q_0 = 0,
	\end{equation} 
	\begin{equation}\label{eq:pert-gcare}
	\begin{split}
	&(A_0+\delta A)^* (X_0+\delta X)(E_0+\delta E)^*  +  (E_0+\delta E)^*(X_0+\delta X) (A_0+\delta A)\\ 
	&-(E_0+\delta E)^*(X_0+\delta X)(F_0+\delta F)(X_0+\delta X)(E_0+\delta E) + (Q_0+\delta Q) = 0,
	\end{split}
	\end{equation}
	where $\delta A,~\delta E, \delta F$ and $\delta Q$ are of low rank, and both $E=E_0+\delta E$ as well as $E_0$ are invertible. By subtracting \eqref{eq:ref-gcare} from \eqref{eq:pert-gcare}, we find that $\delta X$ solves 
	\begin{equation}\label{eq:cor-gcare}
	\begin{split}
	(A- EX_0F)^* \delta XE  + E^*\delta X (A- EX_0F) - E^*\delta XF\delta XE +  \widehat Q= 0
	\end{split}
	\end{equation}
	where $\widehat Q:= \delta Q +\delta A^*X_0E
	+ E^*X_0\delta A- E^*X_0\delta FX_0E+\delta E^*X_0\delta FX_0E_0+E^*X_0\delta FX_0\delta E$ satisfies $\rank{\widehat Q}\leq
	\rank(Q)+2\rank{(\delta A)}+2\rank{(\delta F)}+\min\{\rank{(\delta F)},\rank{(\delta E)}\}$. Similarly as in~\eqref{rhsCARE}, we can write 
	$\widehat Q=U DU^*$ with
	\begin{align*}
	U&:=[U_Q,V_A,E^*X_0U_A,E_0^*X_0U_F,V_E(U_E^*X_0)U_F],\\
	D&=\hat D^*=\diag\left(D_Q,\smb 0&I_{\rank{(U_A)}}\\I_{\rank{(U_A)}}&0\sme,D_F,\smb
	0&I_{\rank{(U_E)}}\\I_{\rank{(U_E)}}&I_{\rank{(U_E)}}\sme\right).
	\end{align*}
	where $\delta E=U_EV_E^*$ with $U_E,V_E\in\R^{n\times \rank{(\delta E)}}$. Again, an optional rank-reducing step for $\hat Q$ is recommended.
	By implicitly working on the equivalent CARE defined by the coefficients $E^{-1}(A- EX_0F)$, $F$, $E^{-*}\widehat Q E^{-1}$, Algorithm~\ref{alg:rksm_care} extends with minor modifications to~\eqref{eq:cor-gcare}.
	%, e.g., . Then,  the linear systems to be solved take the form $(A-FX_0E-\xi_t E)^*\tilde
	%v=E^*v_t$.
	We refer to~\cite{simoncini2013two,Benner2013,Sim16} for further details.
	
	%%%%%%%%%%%%%%%%%%%%%%%%%%%%%%%%%%%%%%%%%%%%%%%%%%%%%%%
	\subsection{Low-rank solution of correction equation for UQME}
	
	The correction equation~\eqref{eq:cor-uqme} features a constant coefficient that has low rank. 
	However, unlike in the case of CARE, we are not aware of existing large-scale solvers tailored to this situation, neither for UQME nor for the modified form~\eqref{eq:cor-uqme}. For example, a fast cyclic reduction iteration proposed in \cite{Bini2013} requires \emph{both} the quadratic and the constant coefficient (that is, the matrices $A$ and $C$ in \eqref{eq:uqme}) to be of low rank. 
	
	In the following, we develop a novel subspace projection method, largely inspired by the existing techniques for CARE described above.
	
	We first discuss the choice of subspaces for our method 
	and consider the Sylvester equation~\eqref{eq:sylv-uqme-corr} for this purpose. In principle, subspace projection methods for Sylvester equations are well understood, but the coefficients of~\eqref{eq:sylv-uqme-corr}  involve the matrix $X$, which depends on the unknown $\delta X$. Solely for the purpose of choosing the subspaces, we replace $X$ with the reference solution $X_0$ and consider
	\begin{equation} \label{eq:simplifiedsylv}
	(X_0+A^{-1}B)\delta X +\delta X X_0=-A^{-1}\widehat{C} 
	\end{equation}
	instead.
	Again, we assume that the low-rank updates are given in factorized form: $\delta A= U_AV_A^*,\delta B= U_BV_B^*$ and $\delta C= U_CV_C^*$. Then, the right-hand sides of \eqref{eq:sylv-uqme-corr} and~\eqref{eq:simplifiedsylv} can be written as $A^{-1}\widehat{C}=UV^*$ with
	\begin{equation}\label{eq:uvuqme}
	U = [A^{-1}U_A, A^{-1}U_B,A^{-1}U_C], \qquad V=[(X_0^*)^2V_A, X_0^*V_B,V_C].
	\end{equation}
	As for CARE, it is recommended to apply a preprocessing step aiming at reducing the rank of $UV^*$. %Equation \eqref{eq:cor-uqme} now reads as
	%\begin{equation}\label{eq:trasf-cor-uqme}
	%(X_0+A^{-1}B)\delta X +\delta XX_0=-UV^*
	%\end{equation}
	Existing solver for Sylvester equations~\cite{Sim16} suggest the use of rational Krylov subspaces with coefficient matrices $X_0+A^{-1}B$, $X^*_0$ and starting vectors $U$, $V$ in order to solve~\eqref{eq:simplifiedsylv}. Specifically, we choose 
	\begin{align} \label{eq:subspacesuqme}
	\mathcal U_t:=\mathcal {RK}_{2t}(X_0+A^{-1}B,U, \mathbf{\pm 1}_t) ,\qquad
	\mathcal V_t:=\mathcal {RK}_{2t}(X_0^*,V, \mathbf{\pm 1}_t),
	\end{align}
	where $\mathbf{\pm 1}_t=[1,-1,\dots,1,-1]^*\in\mathbb R^{2t}$. This particular choice of shift parameters corresponds to the \emph{extended Krylov subspace} \cite{Simoncini2007} for Sylvester equations, adapted to the case in which the spectra of the coefficients are separated by the unit circle instead of the imaginary line. Indeed, we replaced $0$ and $\infty$, the usual choice in the extended Krylov method, with $T(0)=-1$ and $T(\infty)=1$ where $T(z):=-\frac{1+z}{1-z}$ is the Cayley transform.
	
	Suppose now that $U_t,V_t$ contain orthonormal bases of the subspaces defined in~\eqref{eq:subspacesuqme}. To construct an approximate solution $\delta X_t = U_tYV_t^*$ of the \emph{original} equation~\eqref{eq:sylv-uqme-corr}, we impose a Galerkin condition with respect to the  tensorized space $\mathcal U_t\otimes\mathcal V_t$. This implies that the small matrix $Y$ satisfies the non-symmetric algebraic Riccati equation
	\begin{equation}\label{eq:proj-uqme-corr}
	\begin{split}
	Y\widetilde FY +\widetilde A Y &+Y\widetilde D=\widetilde Q,\qquad \\
	\widetilde A=
	U_t^*(X_0+A^{-1}B)V_t,\quad \widetilde F= V_t^*U_t,\quad \widetilde D&= V_t^*X_0^*V_t,\quad\widetilde Q=
	U_t^*UV^*V_t.
	\end{split}
	\end{equation}
	This compressed equation is solved by the \emph{structured doubling algorithm (SDA)} \cite{Guo2006,BinIM12}, see also Algorithm~\ref{alg:sda}. If the projected Hamiltonian \begin{equation} \label{eq:projhamiltonian} 
	\begin{bmatrix}
	\widetilde A& -\widetilde F\\
	-\widetilde Q&-\widetilde D
	\end{bmatrix}\end{equation}
	has an eigenvalue splitting with respect to the unit disc and both equation \eqref{eq:proj-uqme-corr} and its dual equation (the one obtained interchanging $\widetilde F$ and $\widetilde Q$) admit a minimal solution, then SDA converges quadratically to the minimal solution of \eqref{eq:proj-uqme-corr} \cite[Theorem 5.4]{BinIM12}. %\textcolor{red}{??? Please provide the exact theorem in the book by Bini et al. It is hard to match the algorithm with the notation in the book. ???}  %The matrix $\delta X_t = U_tYV_t^*$ is returned as approximation to the
	%solution of~\eqref{eq:cor-uqme}.
	\begin{algorithm}
		\caption{Structured doubling algorithm for  \eqref{eq:proj-uqme-corr}}\label{alg:sda}
		\begin{algorithmic}[1]
			\Procedure{SDA\_NARE}{$\widetilde A,\widetilde D,\widetilde F,\widetilde Q$}
			\State $\begin{bmatrix} S_{11}&S_{12}\\ S_{21}&S_{22}
			\end{bmatrix}
			\gets \begin{bmatrix}I& \widetilde F\\ 0 & -\widetilde A\end{bmatrix}^{-1}\begin{bmatrix}\widetilde D& 0 \\ -\widetilde Q& I\end{bmatrix}$
			\State $E \gets S_{11},\qquad  G = -S_{12},\qquad 
			P = -S_{21},\qquad F = S_{22}$
			\For{$t=1,2,\dots$}	
			\State {\bf if} converged {\bf then return} $P$ %{\bf end if}
			\State $\widetilde G\ \gets I - G\cdot P,\quad\ \ \ \hspace{1pt}  \ \ \ \    \widetilde P \gets I - P\cdot G$ \label{line:gp}
			\State $	E_1 \gets E^{-1}\widetilde G,\qquad\qquad\    F_1 \gets F^{-1}\widetilde P$
			\State  $G\ \gets G + E_1\cdot G\cdot F,\quad \ P \gets P + F_1\cdot P\cdot E$
			\State $E\ \gets E_1\cdot E,\qquad \qquad\ \ F \gets F_1\cdot F$;
			\EndFor 
			\EndProcedure
		\end{algorithmic}
	\end{algorithm}%
	
	The whole procedure for solving~\eqref{eq:sylv-uqme-corr} is summarized in Algorithm~\ref{alg:rk-sda}. %
	\begin{algorithm}
		\caption{Extended Krylov subspace method for~\eqref{eq:sylv-uqme-corr}}\label{alg:rk-sda}
		\begin{algorithmic}[1]
			\Procedure{Low\_rank\_UQME\_corr}{$A,B,X_0,U,V$}
			\Statex \Comment{$U,V \in\R^{n\times r}$ defined as in~\eqref{eq:uvuqme} or pre-processed by a rank-reducing step}
			\State $\widehat A\gets X_0+A^{-1}B$
			\State $U_1 =\texttt{orth}([(\widehat A+I)^{-1}U,(\widehat A-I)^{-1}U])$,\quad $V_1 =\texttt{orth}([V,( X_0^*+I)^{-1}V,(X_0^*-I)^{-1}V])])$
			\For{$t=1,2,\dots$}
			
			\State $\widetilde A\gets U_t^*\widehat AV_t$,\quad $\widetilde D\gets U_t^*X_0^*V^*V_t$,\quad $\widetilde F\gets V_t^*U_t$,\quad  $\widetilde Q\gets U_t^*UV^*V_t$ \label{line:uqmecompressed}
			\State $Y\gets$  \Call{SDA\_NARE}{$\widetilde A,\widetilde D,\widetilde F,\widetilde Q$}
			\State {\bf if} converged {\bf then return} $\delta X_t:=U_tYV_t^*$ %{\bf end if}
			\State Partition $U_t=[U^{(0)},U^{(+)},U^{(-)}]$ such that $U^{(+)},U^{(-)} \in\R^{n\times n}$
			\State  Partition $V_t=[V^{(0)},V^{(+)},V^{(-)}]$ such that $V^{(+)},V^{(-)} \in\R^{n\times n}$
			\State $\widetilde U=[(\widehat A+I)^{-1}U^{(+)},(\widehat A-I)^{-1}U^{(-)}]$,\quad  $\widetilde V=[( X_0^*+I)^{-1}V^{(+)},( X_0^*-I)^{-1}V^{(-)}]$
			\State $\widetilde U\gets \widetilde U-U_tU_t^*\widetilde U,\quad \tilde U=\texttt{orth}(\tilde U), \quad U_{t+1}=[U_t,\tilde U]$
			\State $\widetilde V\gets \widetilde V-V_tV_t^*\widetilde V,\quad \tilde V=\texttt{orth}(\tilde V), \quad V_{t+1}=[V_t,\tilde V]$
			\EndFor 
			\EndProcedure
		\end{algorithmic}
	\end{algorithm}%
	A few remarks concerning the implementation of Algorithms~\ref{alg:sda} and~\ref{alg:rk-sda}:
	\begin{itemize}
		\item Algorithm~\ref{alg:sda} is stopped either when $\min\{\norm{E}_1,\norm{F}_1\}<10^{-13}$ or when a maximum of $30$ iterations is reached. If the projected Hamiltonian~\eqref{eq:projhamiltonian}  has the desired splitting of eigenvalues with respect to the unit circle then Algorithm~\ref{alg:sda}  converges quadratically and is therefore likely to match the convergence condition within $30$ iterations. Otherwise, we move on and consider the next (enlarged) extended Krylov subspaces.
		\item We rely on the \texttt{rktoolbox} \cite{berljafa2014rational} for executing the rational block Arnoldi processes that return the orthonormal bases $U_t$ and $V_t$. We remark that the compressed matrices in line~\ref{line:uqmecompressed} of Algorithm~\ref{alg:rk-sda} do not
		need to be computed explicitly; they can be obtained from the rational Krylov decomposition by adding an artificial final step with an infinite shift, see \cite[page 74]{Guttelthesis} and \cite{Beckermann2009}. %Moreover, one can avoid to build the matrix $X_0+A^{-1}B$ and work directly with the pencil $(AX_0+B) -\lambda A$.
		\item 
		The number of iterations $t$ in Algorithm~\ref{alg:rk-sda} is chosen
		adaptively 
		to ensure that the relation
		\begin{equation}\label{eq:stop}
		\norm{\delta X_t^2+(X_0+A^{-1}B)\delta X_t + \delta X_t X_0 +A^{-1}\widehat C}_2 \leq \tau_{\mathsf{uqme}}
		\end{equation} is satisfied for some tolerance $\tau_{\mathsf{uqme}}$. The artificial final step with an infinite shift mentioned above allows this relation to be verified efficiently, see~\cite{Simoncini2007,Heyouni2010}. 
		%	\textcolor{red}{??? There is a mismatch: The caption of Algorithm~\ref{alg:rk-sda} claims that the algorithm solves the original correction equation, but the residual test is performed for the Sylvester-like equation. Fix one of them. ???}
		
		\item For applying $(\widehat A\pm I)^{-1}$, $(X_0^*\pm I)^{-1}$, 
		LU factorizations of $\widehat A\pm I$ and $X_0^*\pm I$ are computed once before starting the rational block Arnoldi process.
		
		\item
		After termination of Algorithm~\ref{alg:rk-sda}, it is -- once again -- recommended to perform an optional post-processing step that aims at reducing the rank of $\delta X_t$.
	\end{itemize}

	%%%%%%%%%%%%%%%%%%%%%%%%%%%%%%%%%%%%%%%%%%%%%%%%%%%%%%%%%%%%%%%
	\section{Divide-and-conquer methods} \label{sec:dac}
	
	Having an efficient procedure for performing low-rank updates at hand allows us to design divide-and-conquer methods for quadratic matrix equations with rank structured coefficients. For example, suppose that the coefficients of the CARE~\eqref{eq:pert-care} admit the decompositions
	\begin{equation}\label{eq:decomp}
	A=A_0+\delta A,\quad F=F_0+\delta F,\quad Q=Q_0+\delta Q,
	\end{equation}
	where $A_0,F_0,Q_0$ are block diagonal matrices of the same shape and $\delta A,\delta F,\delta Q$ have low rank. This allows us to split \eqref{eq:pert-care} into the correction equation~\eqref{eq:cor-care}, which we solve with Algorithm~\ref{alg:rksm_care}, and the two smaller, decoupled equations associated with the diagonal blocks of $A_0,F_0,Q_0$. If these diagonal blocks again admit a decomposition of the form~\eqref{eq:decomp}, we recursively repeat the splitting. The described strategy easily adapts to the UMQE~\eqref{eq:pert-uqme}.
	
	The storage and manipulation of the low-rank corrections on the various levels of the recursion requires to work with a suitable format,  such as the HODLR format.
	
	\subsection{HODLR matrices}
	
	A HODLR matrix $A\in \mathbb R^{n\times n}$ admits block partition
	\begin{equation} \label{eq:hodler1level}
	A = \begin{bmatrix}
	A_{11}&A_{12}\\
	A_{21}&A_{22}
	\end{bmatrix},
	\end{equation}
	where $A_{12}$, $A_{21}$ have low rank and $A_{11}$, $A_{22}$ are square matrices that again take the form~\eqref{eq:hodler1level}. This splitting is continued recursively  until the diagonal blocks reach a certain minimal block size $n_{\mathsf{min}}$. Usually, the partitioning is chosen such that $A_{11}$, $A_{22}$ have nearly equal sizes. Banded matrices are an important special case of HODLR matrices.

	We say that $A$ has \emph{HODLR rank $k$} if $k$ is the smallest integer such that the ranks of $A_{21}$ and $A_{12}$ in \eqref{eq:hodler1level} are bounded by $k$ at all levels of the recursion.
	If $k$ remains small then $A$ can be stored efficiently by  replacing each off-diagonal block with its low-rank factors.
	The only dense blocks that need to be stored are the diagonal blocks at the lowest level, see Figure~\ref{fig:hodlr}. In turn, the storage of a HODLR matrix requires $O(kn\log n)$ memory.
	\begin{figure}[!ht]
		\centering
		\includegraphics[width=0.8\textwidth]{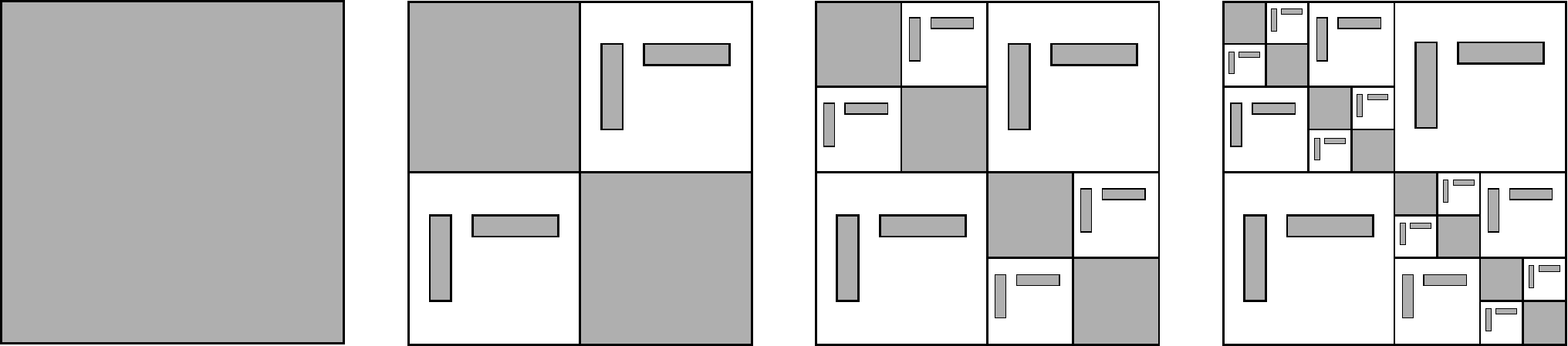}
		\caption{ Image taken from \cite{kressner2017low} describing the HODLR format for different recursion depths.
			%The diagonal sub-blocks filled in gray are dense blocks; the others are stored as low-rank outer products.
		}\label{fig:hodlr}
	\end{figure}
	%%%%%%%%%%%%%%%%%%%%%%%%%%%%%%%%%%%%%%%%%%%%%%%%%%%%%%%%%%%%%
	\subsection{Divide-and-conquer in the HODLR format}
	
	For a CARE~\eqref{eq:pert-care} with HODLR matrices $A, Q$ and a low-rank matrix $F=BB^*$, a divide-and-conquer method can be derived along the lines of the linear case discussed in~\cite{kressner2017low}. Consider 
	\begin{subequations}\label{decompAFQ}
		\begin{align}
		A=\smb
		A_{11}&0\\
		0&A_{22}
		\sme
		+
		\smb
		0&U_1V_2^*\\
		U_1V_2^*&0
		\sme
		=\smb
		A_{11}&0\\
		0&A_{22}
		\sme
		+
		U_AV_A^*,
		\quad U_A=\smb
		U_1&\\&U_2
		\sme,\quad
		V_A=\smb
		V_1&\\&V_2
		\sme
		\end{align}
		and likewise, by exploiting symmetry and low-rank structure  for $Q$ and $F$, the splittings
		\begin{align}
		Q&=\smb
		Q_{11}&0\\
		0&Q_{22}
		\sme
		+
		\smb
		0&U_1U_2^*\\
		U_2U_1^*&0
		\sme
		=\smb
		Q_{11}&0\\
		0&Q_{22}
		\sme
		+
		U_QD_QU_Q^*,
		\quad U_Q=\smb
		U_1&\\&U_2
		\sme,\quad
		D_Q=\smb
		0&I\\I&0
		\sme,\\
		F&=BB^*=\smb
		B_1\\B_2
		\sme
		\smb
		B_1\\B_2
		\sme^*
		=
		U_FU_F^*
		+U_FD_FU_F,
		\quad U_F=\smb
		B_1&\\&B_2
		\sme,\quad
		D_F=\smb
		0&I\\I&0
		\sme.
		\end{align}
	\end{subequations}
	The diagonal blocks $A_{ii},~Q_{ii}$, $i=1,2$ are again HODLR matrices (with the recursion depth reduced by one). After recursively solving the CAREs associated with the diagonal
	blocks, a low-rank approximation to the solution of the correction equation~\eqref{eq:cor-care} is obtained with Algorithm~\ref{alg:rksm_care}.  The resulting procedure is summarized in Algorithm~\ref{alg:dac_care}. 
	As highlighted in the pseudo-code, it is strongly recommended to reduce the ranks of $UDU^*$ in line~\ref{compr_rhs} and of $X_0+\delta X$ in
	line~\ref{compr_sol}. Algorithm~\ref{alg:dac_care} requires the equations associated with the diagonal blocks to admit a unique stabilizing solution at all levels of the recursion.
	% Also the computed low-rank solution $\delta X$ of~\eqref{eq:cor-care} should be
	% recompressed, although this is often already done within Algorithm~\ref{}
	\begin{algorithm}
		\caption{Divide-and-conquer method for CARE with HODLR coefficients}\label{alg:dac_care}
		\begin{algorithmic}[1]
			\Procedure{D\&C\_CARE}{$A,B,Q$}\Comment{Solve $A^*X+XA-XBB^TX+Q=0$}
			\If{$A$ is a dense matrix}
			\State \Return \Call{Dense\_CARE}{$A,B,Q$}
			\Else
			\State Decompose \[A =\begin{bmatrix}
			A_{11}&0 \\ 0&A_{22}
			\end{bmatrix}+U_AV_A^*,\ F=U_FU_F^* + U_FD_FU_F^*,\, Q=\begin{bmatrix}
			Q_{11}& 0\\ 0&Q_{22}
			\end{bmatrix}+U_QD_QU_Q^*\]
			
			~~~with $U_A, V_A, U_F, D_F,U_Q,D_Q$ defined as in~\eqref{decompAFQ}.
			\State $X_{11} \gets $ \Call{D\&C\_CARE}{$A_{11},B_{1},Q_{11}$}
			\State $X_{22} \gets $ \Call{D\&C\_CARE}{$A_{22},B_{2},Q_{22}$}
			\State \label{line:X0} Set $X_0\gets\begin{bmatrix}
			X_{11} & 0 \\ 0&X_{22}
			\end{bmatrix}$
			\State \label{line:largerlowrank} Set $U=[U_Q, V_A, X_0U_A,X_0U_F]$ and 
			$D$ as in~\eqref{rhsCARE}. \Comment{Compression is recommended}\label{compr_rhs}
			\State \label{line:lrupdate} $\delta X\gets $  \Call{low\_rank\_CARE}{$A-(X_0B)B^*,B,U,D$}\Comment{Algorithm~\ref{alg:rksm_care}}
			\State \label{step:hodlrcompress} \Return $X_0+\delta X$ \Comment{Compression is recommended}\label{compr_sol}
			\EndIf
			\EndProcedure
		\end{algorithmic}
	\end{algorithm}
	
	The divide-and-conquer method for a UQME with HODLR matrix coefficients is derived in an analogous manner. The only substantial changes are that the equations associated with the diagonal blocks are solved by \emph{cyclic reduction}~\cite{Bini2009}, see Algorithm~\ref{alg:cr}.
	The resulting procedure is summarized in Algorithm~\ref{alg:dac_uqme}. This algorithm requires that the matrix polynomials associated with the diagonal blocks --- $\lambda^2A_{jj}+\lambda B_{jj}+C_{jj}$ for $j=1,2$ --- maintain the splitting property \eqref{eq:split}, at all levels of the recursion. Similarly as in Algorithm~\ref{alg:dac_care}, compression is recommended in
	lines~\ref{line:1} and~\ref{line:2} of Algorithm~\ref{alg:dac_uqme}.
	\begin{algorithm}
		\caption{Cyclic reduction for UQME}\label{alg:cr}
		\begin{algorithmic}[1]
			\Procedure{Dense\_CR}{$A,B,C$}\Comment{Solve $AX^2+BX+C=0$}
			\State $A^{(0)}\gets A$,  $B^{(0)} \gets B$, $\widehat B^{(0)} \gets B$, $C^{(0)}\gets C$
			\For{$t=0,1,\dots$}	
			\State {\bf if} converged {\bf then return} $-(\widehat B^{(t)})^{-1}B$
			\State $A^{(t+1)}\gets-A^{(t)}(B^{(t)})^{-1}A^{(t)}$	
			\State $B^{(t+1)}\gets B^{(t)}-C^{(t)}(B^{(t)})^{-1}A^{(t)}-A^{(t)}(B^{(t)})^{-1}C^{(t)}$
			\State $\widehat B^{(t+1)}\gets\widehat B^{(t)}-C^{(t)}(B^{(t)})^{-1}A^{(t)}$
			\State $C^{(t+1)}\gets-C^{(t)}(B^{(t)})^{-1}C^{(t)}$.
			\EndFor 
			\EndProcedure
		\end{algorithmic}
	\end{algorithm}
	\begin{algorithm}
		\caption{Divide-and-conquer method for UQME with HODLR coefficients}\label{alg:dac_uqme}
		\begin{algorithmic}[1]
			\Procedure{D\&C\_UQME}{$A,B,C$}\Comment{Solve $AX^2+BX+C=0$}
			\If{$A$ is a dense matrix}
			\State \Return \Call{Dense\_CR}{$A,B,C$}
			\Else
			\State Decompose \[A =\begin{bmatrix}
			A_{11}&0 \\ 0&A_{22}
			\end{bmatrix}+U_AV_A^*,\ B =\begin{bmatrix}
			B_{11}&0 \\ 0&B_{22}
			\end{bmatrix}+U_BV_B^*,\, C=\begin{bmatrix}
			C_{11}& 0\\ 0&C_{22}
			\end{bmatrix}+U_CV_C^*.\]
			\State $X_{11} \gets $ \Call{D\&C\_UQME}{$A_{11},B_{11},C_{11}$}
			\State $X_{22} \gets $ \Call{D\&C\_UQME}{$A_{22},B_{22},C_{22}$}
			\State  Set $X_0\gets\begin{bmatrix}
			X_{11} & 0 \\ 0&X_{22}
			\end{bmatrix}$
			\State  Set $U = [A^{-1}U_A, A^{-1}U_B,A^{-1}U_C]$ and  $V=[(X_0^*)^2V_A, X_0^*V_B,V_C]$  \label{line:1}
			\State  $\delta X\gets $  \Call{Low\_rank\_UQME\_corr}{$A,B,X_0,U,V$}
			%\State  \Return \Call{Compress}{
			\State  \Return $X_0+\delta X$ \label{line:2}
			\EndIf
			\EndProcedure
		\end{algorithmic}
	\end{algorithm}
	\subsubsection{Complexity of divide-and-conquer in the HODLR format}\label{sec:complexity}
	
	The complexity of Algorithms~\ref{alg:dac_care} and~\ref{alg:dac_uqme} critically depends on the convergence of the projection methods (Algorithms~\ref{alg:rksm_care} and~\ref{alg:rk-sda}, respectively) used for solving the correction equations. To a milder extent, it also depends on the numerical methods used for solving the small dense equations associated with the diagonal blocks on the lowest level of the recursion. In order to provide some insights of the computational cost we make the following simplifying assumptions:
	
	\begin{enumerate}[label=(\roman*)]
		\item[(i)]  Algorithm~\ref{alg:rksm_care} and Algorithm~\ref{alg:rk-sda} converge in a constant number of iterations;
		\item[(ii)] solving the dense (unstructured) equations has complexity $\mathcal O(n^3)$;
		\item[(iii)]  the matrix $Q$ in CARE has rank $k$;
		\item[(iv)]  all involved HODLR matrices have HODLR rank $k$ and have a regular partition, that is, $n=2^pn_{\mathsf{min}}$ and the splitting \eqref{eq:hodler1level} always generates equally sized diagonal blocks;
		\item[(v)] the compressions in  Algorithm~\ref{alg:dac_care} and Algorithm~\ref{alg:dac_uqme}  is \emph{not} performed.
	\end{enumerate}
	Under the assumptions stated above, the LU decomposition of an $n\times n$  HODLR matrix requires $\mathcal O(k^2n\log^2(n))$ operations,  while performing forward or backward substitution with a vector is $\mathcal O(kn\log(n))$. A matrix-vector product is $\mathcal O(kn\log(n))$ and all involved matrix-matrix operations are at most $\mathcal O(k^2n\log^2(n))$, see, e.g.,~\cite{Hackbusch2015}.
	
	\begin{paragraph}{CARE.}
		Let $\mathcal C_{\mathsf{care}}(n,k)$ denote the complexity of Algorithm~\ref{alg:dac_care}. 
		Assumption (i) implies that the cost of Algorithm~\ref{alg:rksm_care}, called at  Line~\ref{line:lrupdate}, is $\mathcal O(k^2n\log^2(n))$, because it is dominated by the cost of solving (shifted) linear systems with the matrix $A_{\mathsf{corr}}$.  Assumption (i) also implies that $X_0$, see Line~\ref{line:X0}, has HODLR rank $\mathcal O(k\log(n))$. Because $U_A$ and $U_F$ each have $2k$ columns, the matrix multiplications $X_0U_A$ and $X_0U_F$ at Line~\ref{line:largerlowrank} require $\mathcal O(k^2n\log^2(n))$ operations. 
		Finally, thanks to assumption (ii) we have 
		\begin{equation}\label{eq:complexity}
		\mathcal C_{\mathsf{care}}(n,k)= \begin{cases}
		\mathcal O(n_{\mathsf{min}}^3)&\text{if }n= n_{\mathsf{min}},\\
		\mathcal O(k^2n\log^2(n)) + 2\mathcal C_{\mathsf{care}}(\frac n2,k)&\text{otherwise}.
		\end{cases}
		\end{equation}
		Applying the master theorem \cite{cormen} to \eqref{eq:complexity} yields  $\mathcal C_{\mathsf{care}}(n,k)=\mathcal O(k^2n\log^3(n))$.
	\end{paragraph}
	\begin{paragraph}{UQME.}
		Let $\mathcal C_{\mathsf{uqme}}(n,k)$ denote the complexity of Algorithm~\ref{alg:dac_uqme}. 
		Analogously to CARE,  Assumption (i) implies that Algorithm~\ref{alg:rksm_care} requires $\mathcal O(k^2n\log(n)^2)$ operations and that $X_0$ at Line~\ref{line:X0} has HODLR rank $\mathcal O(k\log(n))$. Therefore, the complexity of Line~\ref{line:largerlowrank} is given by the one of solving $\mathcal O(k)$ linear systems, that is,  $\mathcal O(k^2n\log^2(n))$. In turn, the recurrence relation for $\mathcal C_{\mathsf{uqme}}(n,k)$ is identical with~\eqref{eq:complexity} and hence
		$\mathcal C_{\mathsf{uqme}}(n,k)=O(k^2n\log^3(n))$.
	\end{paragraph}

 	\section{Numerical results} \label{sec:numerical}
 	
 	We now proceed to verify the numerical performance of the divide-and-conquer methods, Algorithms~\ref{alg:dac_care} and~\ref{alg:dac_uqme} from Section~\ref{sec:dac}. Our methods are compared with state-of-the-art iterative algorithms for solving quadratic matrix equations:
 	\begin{itemize}
 		\item structure preserving doubling algorithm (SDA) for CARE \cite{Chu2005},
 		\item cyclic reduction (CR) for UQME \cite{Bini2009} (Algorithm~\ref{alg:cr}).
 	\end{itemize}
 	Both algorithms are well suited for coefficients with hierarchical low-rank structures; we have implemented in HODLR arithmetic using the \texttt{hm-toolbox} \cite{hm-tool}. As indicated in the description of the algorithms, we apply recompression with the threshold $\tau_\sigma=10^{-12}$ in order to keep the ranks under control. Unless stated otherwise, we set the minimal block-size to $n_{\mathsf{min}}=256$ for the representation in the HODLR format. The parameters $\tau_{\mathsf{care}},\tau_{\mathsf{uqme}}$ used in Algorithm~\ref{alg:dac_care} and Algorithm~\ref{alg:dac_uqme}, respectively, for stopping the low-rank iterative solver have been set to $10^{-8}$.
 	
 	All experiments have been performed on a Laptop with a dual-core Intel Core i7-7500U 2.70 GHz CPU, 256KB of level 2 cache, and 16 GB of RAM. The
 	algorithms are implemented in MATLAB and tested under MATLAB2017a, 
 	with MKL BLAS version 11.2.3 utilizing both cores.
 	
 	\subsection{Results for CARE}
 	
 	We will use the following three examples to test the performance of Algorithm~\ref{alg:dac_care} for CARE.
 	\begin{example} \label{example:care1} \rm 
 		This is an academic example of arbitrary size $n$: $A=\tridiag(1,-2,1)$, that is, $A$ is a tridiagonal matrix with $-2$ on the diagonal and $1$ on the sub- and superdiagonal. The matrix $B\in\R^{n\times 2}$ is random with normally distributed entries, $Q=Q_0+ (0.1-\theta)I$, where $Q_0$ is a
 		random symmetric tridiagonal matrix also with normally distributed entries, and $\theta\in\R$ is the smallest eigenvalue of $Q_0$.
 	\end{example}
 	
 	\begin{example}\label{example:care2} \rm 
 		This example is taken from~\cite[Example 3.2 and Section~5.6]{kressner2017low}:
 		\begin{align*}
 		%A&= \tilde A-Z_0Z_0^*BB^*,\quad 
 		A=\smb 0&M\\I&-I\sme,\quad
 		M=\tfrac{1}{4}\tridiag(1,-2,1)-\tfrac{1}{2}(e_1e_1^T + e_n e_n^T), \quad %~M_{n,n}=M_{1,1}=-\tfrac{1}{4},\quad
 		B=\tfrac{1}{4}[e_{n+1},-e_{2n}],\quad Q=I_n,
 		\end{align*}
 		where $e_j$ denotes the $j$th unit vector of appropriate length.
 		Since $A$ is unstable, we use an initial stabilizing solution $X_0:=Z_0Z_0^*$, $Z_0=8\smb -e_n &e_1\\-e_n&e_1\sme$ and consider 
 		the stabilized CARE given by $\tilde A:=A-X_0^*BB^*$ and $\tilde Q:=Q-X_0^*BB^*X_0^*+A^*X_0+X_0A$. Because of the structure of $B$ and $Z_0$, $\tilde A$ is still sparse.
 		All matrices are scaled by $\|A\|_2$ and, to acquire a banded structure, reordered by a perfect shuffle permutation.
 	\end{example}
 	
 	\begin{example} \label{example:care3}\rm 
 		This example is \texttt{carex18} from the CARE benchmark collection~\cite{Abels1999} with tridiagonal $A$ and $E$, $B\in\R^n$, but we set $Q:=I_n$.
 	\end{example}
 	
 	All matrices have been converted into the HODLR format using the \texttt{hmtoolbox}. However, for  the fast solution of the linear systems in Algorithm~\ref{alg:rksm_care} we invoke the original sparse matrices $A$ and $E$ and call a sparse direct solver via MATLAB's ``backslash''.
 	\begin{table}[t]
 		\centering
 		
 		\small
 		\pgfplotstabletypeset[%column type=l,
 		column type=c,
 		every head row/.style={
 			before row={
 				\toprule
 				& \multicolumn{3}{c|}{Algorithm~\ref{alg:dac_care}} & \multicolumn{3}{c}{\quad SDA}\\
 			},
 			after row = \midrule,
 		},
 		every last row/.style={after row=\bottomrule},
 		sci zerofill,
 		columns={0,1,2,3,4,5,6},
 		columns/0/.style={column name=$n$},
 		columns/1/.style={column name=$\mathrm{Time}$},
 		columns/2/.style={column name=$\mathrm{Res}$},    
 		columns/3/.style={column type/.add={}{|},column name=HODLR rank},
 		columns/4/.style={column name=$\mathrm{Time}$},
 		columns/5/.style={column name=$\mathrm{Res}$},    
 		columns/6/.style={column name=HODLR rank}
 		]{exp_care1.dat}
 		\caption{Execution times (in seconds) and residuals for the divide-and-conquer method and SDA applied to the CARE from Example~\ref{example:care1}. 
 		}
 		\label{tab:care1}
 	\end{table}
 	The results --- compared to those of SDA --- are summarized in the Tables~\ref{tab:care1}--\ref{tab:care3}, where $\mathrm{Res}=\|A^* X  +  X A - XBB^*X + Q\|_2/\|X\|_2$. The reported computing times for both methods clearly reveal that the divide-and-conquer method requires substantially less time than SDA while achieving a similar or even better level of accuracy at the same time.
 	Most of the time in SDA was spent in the numerous HODLR matrix-matrix multiplications and the associated recompression steps after each multiplication. 
 	%\textcolor{red}{??? Is there intermediate rank growth? A few sanity checks that we are not encountering an issue in SDA that could be easily fixed.. ???}
 	%\textcolor{red}{??? Could we say something here why the times are so large for SDA? Presumably most of the time is spent on recompression. Is this within the matrix-matrix multiplication? Is there intermediate rank growth? A few sanity checks that we are not encountering an issue in SDA that could be easily fixed.. ???}

 	For the largest matrices from Example~\ref{example:care1}, $n=32\,768$, we have profiled the computing time spent at the different stages of the divide-and-conquer method. Solving dense CAREs for the diagonal blocks at the lowest level of recursion consumed about 30\% of the total time, while about 50\% was spent on solving the correction equation, CARE~\eqref{eq:cor-care}, by RKSM (Algorithm~\ref{alg:rksm_care}). About 15\% of the time was spent on performing the update $X_0+\delta X$ (line~\ref{compr_sol} in Algorithm~\ref{alg:dac_care}). The work spent on rank compressions was negligible; it consumed less than 1\% of the total time. 
 	Within RKSM, orthonormalization within the Arnoldi method and the solution of the compressed CAREs were the most time consuming steps (totaling approximately 40\% of the time spent on RKSM), followed by the procedure for shift generation (15\%). Due to the sparse, banded structure of $A$, the linear system solves consumed only a very small fraction (about 3\%). We note that the time for solving the correction equation~\eqref{eq:cor-care} could potentially be reduced by employing a different low-rank solver for CAREs. A good candidate for such a solver is the recently proposed RADI method~\cite{BenBKetal18}, which does not rely on Galerkin projections and, hence, does not require solving compressed CAREs\footnote{Preliminary results suggest that replacing Algorithm~\ref{alg:rksm_care} with RADI reduces the time by 10\% on average over all used sizes $n$.}.  
 	We also investigated the effect of reducing the block size $n_{\mathsf{min}}$ from 256 to 128. As expected, this decreased the fraction of computing time spent on diagonal blocks from 30\%  to 10\% but, due to the higher number of occurrences, increased part spent on solving the correction CAREs to about 67\%. The change in the overall time for 
 	Algorithm~\ref{alg:dac_care} was negligible compared to $n_{\mathsf{min}}=256$.

 	\begin{table}[t]
 		\centering
 		
 		\small
 		\pgfplotstabletypeset[%column type=l,
 		column type=c,
 		every head row/.style={
 			before row={
 				\toprule
 				& \multicolumn{3}{c|}{Algorithm~\ref{alg:dac_care}} & \multicolumn{3}{c}{\quad SDA}\\
 			},
 			after row = \midrule,
 		},
 		every last row/.style={after row=\bottomrule},
 		sci zerofill,
 		columns={0,1,2,3,4,5,6},
 		columns/0/.style={column name=$n$},
 		columns/1/.style={column name=$\mathrm{Time}$},
 		columns/2/.style={column name=$\mathrm{Res}$},    
 		columns/3/.style={column type/.add={}{|},column name=HODLR rank},
 		columns/4/.style={column name=$\mathrm{Time}$},
 		columns/5/.style={column name=$\mathrm{Res}$},    
 		columns/6/.style={column name=HODLR rank}
 		]{exp_care2.dat}
 		\caption{Execution times (in seconds) and residuals for the divide-and-conquer method and SDA applied to the CARE from Example~\ref{example:care2}.	}
 		\label{tab:care2}
 	\end{table}
 	
 	\begin{table}[t]
 		\centering
 		
 		\small
 		\pgfplotstabletypeset[%column type=l,
 		column type=c,
 		every head row/.style={
 			before row={
 				\toprule
 				& \multicolumn{3}{c}{Algorithm~\ref{alg:dac_care}}\\
 			},
 			after row = \midrule,
 		},
 		every last row/.style={after row=\bottomrule},
 		sci zerofill,
 		columns={0,1,2,3},
 		columns/0/.style={column name=$n$},
 		columns/1/.style={column name=$\mathrm{Time}$},
 		columns/2/.style={column name=$\mathrm{Res}$},    
 		columns/3/.style={column name=HODLR rank}
 		]{exp_care3.dat}
 		\caption{Execution times (in seconds) and residuals for the divide-and-conquer method and SDA applied to the CARE from Example~\ref{example:care3}. 
 		}
 		\label{tab:care3}
 	\end{table}

 	\subsection{Results for UQMEs from QBD processes} \label{sec:qbd}
 	
 	QBD processes  are discrete-time stochastic processes with a two-dimensional discrete state space. The variables of the state space  are called \emph{level} and \emph{phase}; the transition --- at each time step --- with respect to the level coordinate has at most unit length. We consider models whose state space is isomorphic to $\mathbb N\times \{0,\dots,n-1\}$, that is, we have infinite levels and a finite number of possible phases. Moreover, we assume that the process is \emph{level independent}, i.e. the transition probability depends on the variation of the level but not on its current value.
 	
 	Computing the stationary distribution of a level independent QBD process  amounts to solving a UQME with coefficients corresponding to (possibly shifted) sub-blocks of its transition probability matrix \cite{BiniMarkov}. More specifically, the coefficients of the UQME $AX^2+BX+C=0$ have the properties that $A, B+I,C\in\mathbb R^{n\times n}$ are non-negative and $A+B+C+I$ is stochastic, that is, each row sums to one. 
 	%This framework often ensures the splitting property \eqref{eq:split} with either $\lambda_n=1$ or $\lambda_{n+1}=1$. Under mild assumptions, \eqref{eq:split} is preserved during the recursion in Algorithm~\ref{alg:dac_uqme}.
 	As the following lemma shows, these properties imply --- under some mild additional conditions --- the eigenvalue splitting property~\eqref{eq:split} on every level of recursion in the divide-and-conquer method.
 	\begin{lemma} \label{lemma:uqme}
 		Suppose that $A,B,C$ have the properties stated above and that $\varphi(\lambda)$ has only one eigenvalue on the unit circle, the simple eigenvalue $1$. For some index set $J\subseteq\{1,\dots,n\}$, let $A_J, B_J, C_J$ denote the corresponding principal submatrices of $A,B,C$. Assume that $B_J$ is invertible and $B_J^{-1}(A_J+C_J)$ is irreducible.
 		%and $A_J\circ C_J\neq 0$, where $\circ$ indicates the componentwise product.
 		Then $\varphi_J(\lambda):=\lambda^2A_J+\lambda B_J+C_J$ has the splitting property \eqref{eq:split}.
 	\end{lemma}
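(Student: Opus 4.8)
The plan is to left-multiply $\varphi_J$ by $-B_J^{-1}$, turning it into a quadratic matrix polynomial with \emph{nonnegative} coefficients of ``substochastic QBD type'', and then to count its eigenvalues relative to the unit circle by an argument-principle/homotopy argument. Write $P:=A+B+C+I$, which is stochastic, and $n_J:=|J|$. Since $A,C\ge0$ we have $B+I=P-A-C\le P$ entrywise, hence $\rho(B+I)\le\rho(P)=1$, so $-B=I-(B+I)$ is an M-matrix and likewise $-B_J=I_J-(B+I)_J$ with $\rho((B+I)_J)\le1$. As $B_J$ is invertible, $-B_J$ cannot be a \emph{singular} M-matrix, so $\rho((B+I)_J)<1$ and $(-B_J)^{-1}\ge0$; in particular $B_J^{-1}=-(-B_J)^{-1}\le0$. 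Consequently
\[
\widetilde\varphi(\lambda):=-B_J^{-1}\varphi_J(\lambda)=\lambda^2\widetilde A-\lambda I_J+\widetilde C,\qquad \widetilde A:=-B_J^{-1}A_J\ge0,\quad \widetilde C:=-B_J^{-1}C_J\ge0,
\]
has the same eigenvalues as $\varphi_J$, and $H:=\widetilde A+\widetilde C=-B_J^{-1}(A_J+C_J)$ is nonnegative and, by hypothesis, irreducible. Moreover, from $P\mathbf1=\mathbf1$, i.e.\ $(A+B+C)\mathbf1=0$, restriction to the rows indexed by $J$ gives $\varphi_J(1)\mathbf1_J=-d$ with $d_i=\sum_{j\notin J}P_{ij}\ge0$; left-multiplying by $-B_J^{-1}$ yields $(H-I_J)\mathbf1_J=B_J^{-1}d\le0$, so $H\mathbf1_J\le\mathbf1_J$ and $\rho(H)\le1$.

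In the generic case $\rho(H)<1$ the argument closes cleanly. For $|\lambda|=1$ we have $\widetilde\varphi(\lambda)=-\lambda\bigl(I_J-\lambda\widetilde A-\bar\lambda\widetilde C\bigr)$, and since the entrywise absolute value of $\lambda\widetilde A+\bar\lambda\widetilde C$ is bounded by $H$, we get $\rho(\lambda\widetilde A+\bar\lambda\widetilde C)\le\rho(H)<1$; hence $\widetilde\varphi$ (and thus $\varphi_J$) is nonsingular on the whole unit circle, and it is regular because $\varphi_J(1)=B_J(I_J-H)$ is invertible. Now homotope: $F_t(\lambda):=t\lambda^2\widetilde A-\lambda I_J+t\widetilde C$ for $t\in[0,1]$ satisfies the same estimate with $tH$ in place of $H$, so $\det F_t(\lambda)\neq0$ on $|\lambda|=1$ for all $t$; therefore
\[
N(t):=\frac{1}{2\pi i}\oint_{|\lambda|=1}\frac{(\det F_t)'(\lambda)}{\det F_t(\lambda)}\,d\lambda
\]
is a continuous, integer-valued function of $t$, hence constant, and $N(0)=n_J$ since $\det F_0(\lambda)=(-\lambda)^{n_J}$. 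Thus $\det\widetilde\varphi$ --- and hence $\det\varphi_J$, which differs from it only by the nonzero factor $\det(-B_J^{-1})$ --- has exactly $n_J$ roots in $|\lambda|<1$ and none on $|\lambda|=1$. Since $\varphi_J$ has $2n_J$ eigenvalues counted with multiplicity, including those at infinity, the remaining $n_J$ lie in $|\lambda|>1$ or at $\infty$, which is precisely the splitting property~\eqref{eq:split} (with the strict gap $|\lambda_{n_J}|<1<|\lambda_{n_J+1}|$).

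It remains to handle the boundary case $\rho(H)=1$, which I expect to be the main obstacle. There $H\mathbf1_J=\mathbf1_J$, so $H$ is stochastic and irreducible and $\widetilde\varphi$ is the characteristic polynomial of an irreducible recurrent QBD, with $\lambda=1$ now an eigenvalue of $\varphi_J$ on the unit circle. The conclusion should again be~\eqref{eq:split} --- $n_J-1$ eigenvalues strictly inside, the simple eigenvalue $1$, and $n_J$ strictly outside or at infinity --- which follows from the classical spectral theory of QBD quadratics \cite{BiniMarkov} (cf.\ also \cite{Bini2009}) provided one knows that $1$ is the \emph{only} eigenvalue of $\varphi_J$ on the unit circle. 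Transferring this ``single unit-circle eigenvalue'' property from $\varphi$ to the principal-submatrix polynomial $\varphi_J$ is the delicate point; I would obtain it by combining the irreducibility of $H$ with the hypothesis on $\varphi$ to exclude a nontrivial cyclic (periodic) structure of $H$ on $|\lambda|=1$.
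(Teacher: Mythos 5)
Your handling of the case $\rho(H)<1$ is correct and is, in substance, the paper's own argument: the paper invokes a matrix version of Rouch\'e's theorem with the bound $\rho(\lambda\widetilde A+\lambda^{-1}\widetilde C)\le\rho(\widetilde A+\widetilde C)<1$ on the unit circle, obtained exactly as you obtain it from nonnegativity and the M-matrix property of $-B_J$, while you run the equivalent argument-principle/homotopy count on $\det\widetilde\varphi$. Both yield exactly $|J|$ eigenvalues in the open unit disc, none on the circle, and hence the splitting. Your justification that $-B_J$ is a \emph{nonsingular} M-matrix (so that $-B_J^{-1}\ge 0$) is also fine, and your dichotomy $\rho(H)<1$ versus $\rho(H)=1$ coincides, under irreducibility, with the paper's substochastic/stochastic case split.

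The genuine gap is the boundary case $\rho(H)=1$, which you explicitly leave open, and the route you sketch would not close it. The obstruction is not the possible presence of \emph{other} unit-circle eigenvalues caused by periodicity of $H$; it is the multiplicity of the eigenvalue $1$ of $\varphi_J$ itself. In the classical spectral theory of irreducible stochastic QBDs that you appeal to, the null-recurrent subcase has $\lambda=1$ as a (defective) double eigenvalue of the quadratic, with only $|J|-1$ eigenvalues strictly inside the disc and $|J|-1$ strictly outside, so that $|\lambda_{|J|}|=|\lambda_{|J|+1}|=1$ and \eqref{eq:split} fails. Irreducibility and aperiodicity of $H$ cannot exclude this; you must import the hypothesis on the full polynomial $\varphi$. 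The paper does so in two steps that your sketch is missing: (i) a perturbation argument, replacing $A_J,C_J$ by $A_J-\epsilon E_A$, $C_J-\epsilon E_C$ so that the substochastic case applies for every $\epsilon>0$, and letting $\epsilon\to 0$, which shows (since eigenvalues cannot cross the circle) that $\varphi_J$ has $|J|$ eigenvalues in the closed disc and $|J|$ in the closed exterior; and (ii) the observation that, after reordering, $\varphi(\lambda)$ is block triangular with $\varphi_J(\lambda)$ as a diagonal block, so that $\det\varphi_J$ divides $\det\varphi$ and the only unit-circle eigenvalue $\varphi_J$ can have is $1$, with multiplicity at most one. Together these force the strict gap $|\lambda_{|J|}|<|\lambda_{|J|+1}|$. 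Without step (ii), or some other mechanism transferring the simplicity of the eigenvalue $1$ from $\varphi$ to $\varphi_J$, your boundary case remains unproved.
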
 
 	\begin{proof}
 		For the moment, let us assume that $A_J+B_J+C_J+I$ is substochastic, that is, $(A_J+B_J+C_J+I)\mathbf e\lneq\mathbf e$, where $\mathbf e$ denotes the vector of all ones and the inequality is understood componentwise. We aim at utilizing the following consequence of 
 		Rouch\'e's theorem for matrix-valued functions~\cite[Theorem 3.2]{Melman2013}: if 
 		\begin{equation} \label{eq:rouchecond}
 		\norm{(\lambda B_J)^{-1}(\lambda^2A_J+C_J)} <1,\qquad \forall |\lambda|=1,
 		\end{equation}
 		holds for an induced norm $\norm{\cdot}$ then $\varphi_J(z)$ has exactly $k$ eigenvalues (counting multiplicities) in the open unit disc and $k$ eigenvalues with modulus greater than $1$, where $k$ denotes the cardinality of $J$. This implies the result of the lemma.
 		
 		Setting $\psi(\lambda):=-B_J^{-1}(\lambda A_J+\lambda^{-1}C_J)$, the condition~\eqref{eq:rouchecond} clearly holds if we can show that the spectral radius $\rho(\psi(\lambda))$  is less than $1$ for every  $\lambda$ on the unit circle. Note that $|\lambda A_J+\lambda^{-1}C_J| \le A_J+C_J$ because $A_J,C_J$ are non-negative. Combined with the fact that $-B_J$ is an M-matrix, which implies $-B_J^{-1}\geq 0$, and the monotonicity of the spectral radius, we obtain
 		\[
 		\rho(\psi(\lambda)) \le \rho(|\psi(\lambda)|) = \rho( | -B_J^{-1} (\lambda A_J+\lambda^{-1}C_J) | ) \le 
 		\rho( -B_J^{-1} (A_J+ C_J) )=\rho(\psi(1)).
 		\]
 		Using $-B_J^{-1}\geq 0$ we also have 
 		\[
 		(A_J+B_J+C_J+I)\mathbf e\lneq\mathbf e\quad\Longrightarrow\quad
 		(A_J+C_J)\mathbf e\lneq -B_J\mathbf e\quad\Longrightarrow\quad \psi(1)\mathbf e\lneq \mathbf e.
 		\]
 		In particular, the matrix $\psi(1)$ is irreducible and substochastic, and by the Perron Frobenius theorem \cite[Theorem 1.5]{Seneta} it has spectral radius strictly less than $1$.
 		
 		It remains to consider the case when $A_J+B_J+C_J+I$ is stochastic. Note that, under this assumption  also the matrix $\psi(1)$ is stochastic. Obviously, the statement of the lemma holds when $A_J = 0$ and $C_J = 0$, so we assume $A_J+ C_J \not=0$ from now on.  Assuming $J = \{1,\ldots, k\}$ after a suitable reordering, we can partition
 		\[
 		\varphi(\lambda) = \begin{bmatrix}
 		\varphi_J(\lambda) & 0 \\
 		\star & \star
 		\end{bmatrix}
 		\]
 		and hence an eigenvalue of $\varphi_J(\lambda)$ is also an eigenvalue of $\varphi(\lambda)$.
 		
 		Let us consider the perturbed matrix polynomial
 		$\varphi_{J,\epsilon}(\lambda):=\lambda^2(A_J-\epsilon E_A)+\lambda B_J+(C_J-\epsilon E_C)$ for $\epsilon >0$ and Boolean matrices $E_A,E_C$ with the sparsity pattern of $A$ and $C$, respectively. Because of $A_J+ C_J \not=0$, the matrix  $A_J-\epsilon E_A+B_J+C_J-\epsilon E_C+I$ is substochastic for $\epsilon$ sufficiently close to $0$. Using again Rouch\'e's theorem, this ensures that $\varphi_{J,\epsilon}(\lambda)$ has the property~\eqref{eq:split}. %, with $\lambda_n<1<\lambda_{n+1}$}.
 		By  continuity, the eigenvalue functions of $\varphi_{J,\epsilon}(\lambda)$  do not cross the unit circle as $\epsilon\to 0$ and, in turn,  $\varphi_J(\lambda)=\lim_{\epsilon\to 0}\varphi_{J,\epsilon}(\lambda)$ has $n$ eigenvalues inside or on the unit circle and $n$ eigenvalues outside or on the unit circle. Because the simple eigenvalue $1$ is the only eigenvalue of $\varphi(\lambda)$
 		on the unit circle and the same property holds for $\varphi_J(\lambda)$, this completes the proof.
 	
 	\end{proof}
 	We remark that the eigenvalue assumption on $\varphi(\lambda)$ in Lemma~\ref{lemma:uqme} can be relaxed to the assumption that $1$ is a simple eigenvalue (admitting possibly other eigenvalues on the unit circle), provided that $B_J^{-1}(A_J+C_J)$ is primitive and $A_J\circ C_J\neq 0$, where $\circ$ indicates the componentwise product.
 	
 	Often, the probabilistic model requires bounded transitions in the phase coordinate as well. This translates into a band structure in the matrices $A,B$ and $C$.
 	For example, in the case of \emph{double QBD processes} (DQBD) \cite{Miyazawa} the coefficients are all tridiagonal, see also Figure~\ref{fig:dqbd}. 
 	\begin{figure}[!ht]
 		\centering
 		\includegraphics[width=0.4\textwidth]{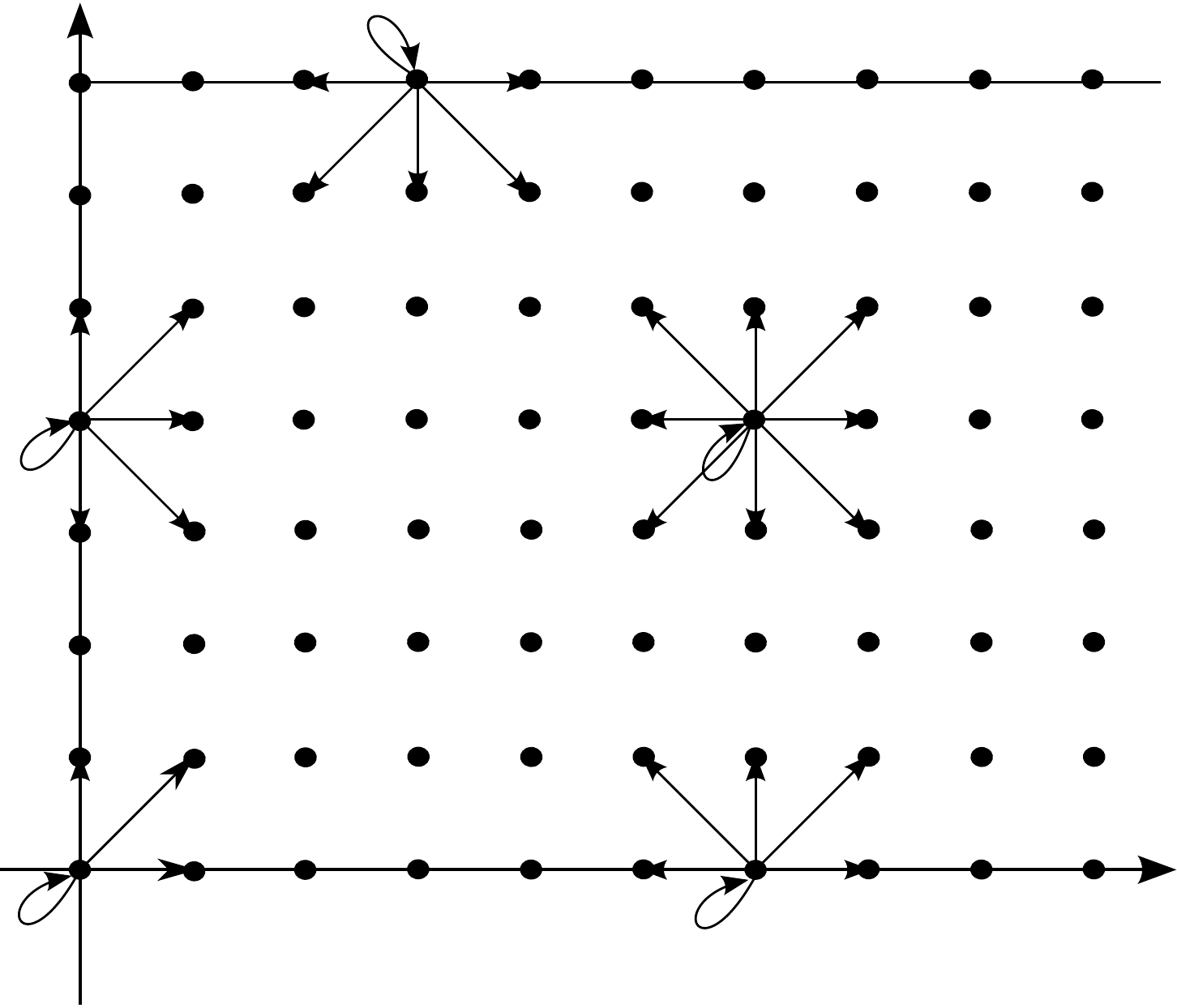}
 		\caption{ Transitions of a double quasi-birth-death process on $\mathbb N \times \{0,\dots n-1\}$.}\label{fig:dqbd}
 	\end{figure}
 	We test Algorithm~\ref{alg:dac_uqme} on instances of DQBD with increasing size $n$. In particular, we choose the entries  of the $3$ central diagonals of $A,B$ and $C$  randomly  from a uniform distribution on $[0,1]$. We divide each row of the three matrices by the corresponding entry in $(A+B+C)\mathbf e$, in order to make $A+B+C$ row stochastic. Finally, we subtract the identity matrix from $B$:
 	\[
 	A=
 	\begin{bmatrix}
 	a_1^{(0)}&a_1^{(1)}\\
 	a_1^{(-1)}&\ddots&\ddots\\
 	& \ddots&\ddots&a_{n-1}^{(1)} \\
 	&&a_{n-1}^{(-1)}&a_{n}^{(0)}
 	\end{bmatrix},
 	\qquad
 	B=
 	\begin{bmatrix}
 	b_1^{(0)}&b_1^{(1)}\\
 	b_1^{(-1)}&\ddots&\ddots\\
 	& \ddots&\ddots&b_{n-1}^{(1)} \\
 	&&b_{n-1}^{(-1)}&b_{n}^{(0)}
 	\end{bmatrix}-I,
 	\]
 	\[
 	C=
 	\begin{bmatrix}
 	c_1^{(0)}&c_1^{(1)}\\
 	c_1^{(-1)}&\ddots&\ddots\\
 	& \ddots&\ddots&c_{n-1}^{(1)} \\
 	&&c_{n-1}^{(-1)}&c_{n}^{(0)}
 	\end{bmatrix}.
 	\]
 	
 	\begin{table}[t]
 		\centering
 		
 		\small
 		\pgfplotstabletypeset[%column type=l,
 		column type=c,
 		every head row/.style={
 			before row={
 				\toprule
 				& \multicolumn{3}{c|}{Algorithm~\ref{alg:dac_uqme}} & \multicolumn{3}{c}{\quad CR}\\
 			},
 			after row = \midrule,
 		},
 		every last row/.style={after row=\bottomrule},
 		sci zerofill,
 		columns={0,1,2,3,4,5,6},
 		columns/0/.style={column name=$n$},
 		columns/1/.style={column name=$\mathrm{Time}$},
 		columns/2/.style={column name=$\mathrm{Res}$},    
 		columns/3/.style={column type/.add={}{|},column name=HODLR rank},
 		columns/4/.style={column name=$\mathrm{Time}$},
 		columns/5/.style={column name=$\mathrm{Res}$},    
 		columns/6/.style={column name=HODLR rank}
 		]{e_rand_uqme.dat}
 		\caption{Execution times (in seconds) and residuals for the divide-and conquer-method  and cyclic reduction applied to the example from Section~\ref{sec:qbd}.  
 		}
 		\label{tab:qbd}
 	\end{table}
 	In Table~\ref{tab:qbd} we compare the performance of Algorithm~\ref{alg:dac_uqme} with the method in~\cite{Bini2016} that combines cyclic reduction --- Algorithm~\ref{alg:cr} --- with HODLR arithmetic. Both methods can handle large values for $n$ and return solutions of comparable accuracy, measured in terms of $\mathrm{Res}:=\|AX^2+BX+C\|_2$. However, the divide-and-conquer method provides a significant speed up; it is about $3$ times faster than the competitor for $n\geq 4096$.
 	
 	\subsection{Results for UQMEs from damped mass-spring system} \label{sec:massspring}
 	
 	Another application of UQME is the solution of the quadratic eigenvalue problem $(\lambda ^2A+\lambda B +C)v=0$, arising in the analysis of damped structural systems and vibration problems \cite{Datta10,Lancaster1966,Higham2000b}. After having determined the solution $X$ of \eqref{eq:uqme}, the quadratic eigenvalue problem reduces to two linear eigenvalue problems: the one associated with $X$ and the generalized eigenproblem $(AX+B)v=-\lambda Av$.
 	
 	We repeat the experiments from Section~\ref{sec:qbd} for the UQME associated with a quadratic eigenvalue problem from a damped mass spring system considered in \cite[Example 2]{Tisseur2000}. The $n\times n$ coefficients of the UQME are given by 
 	\[
 	A=I,\qquad 
 	B =\begin{bmatrix}
 	20&-10\\
 	-10&30&-10\\
 	& \ddots&\ddots&\ddots \\
 	&&-10&30&-10\\
 	&&&-10&20
 	\end{bmatrix},\qquad
 	C=
 	\begin{bmatrix}
 	15&-5\\
 	-5&15&-5\\
 	& \ddots&\ddots&\ddots \\
 	&&-5&15&-5\\
 	&&&-5&15
 	\end{bmatrix}.
 	\]
 	The results reported in Table~\ref{tab:spring} confirm the good scalability and accuracy of both methods. The solution exhibits a very low HODLR rank and cyclic reduction needs only 2--3 iterations to converge. As a consequence, cyclic reduction is  faster than Algorithm~\ref{alg:dac_uqme} on larger instances of this example. 
 	\begin{table}[t]
 		\centering
 		
 		\small
 		\pgfplotstabletypeset[%column type=l,
 		column type=c,
 		every head row/.style={
 			before row={
 				\toprule
 				& \multicolumn{3}{c|}{Algorithm~\ref{alg:dac_uqme}} & \multicolumn{3}{c}{\quad CR}\\
 			},
 			after row = \midrule,
 		},
 		every last row/.style={after row=\bottomrule},
 		sci zerofill,
 		columns={0,1,2,3,4,5,6},
 		columns/0/.style={column name=$n$},
 		columns/1/.style={column name=$\mathrm{Time}$},
 		columns/2/.style={column name=$\mathrm{Res}$},    
 		columns/3/.style={column type/.add={}{|},column name=HODLR rank},
 		columns/4/.style={column name=$\mathrm{Time}$},
 		columns/5/.style={column name=$\mathrm{Res}$},    
 		columns/6/.style={column name=HODLR rank}
 		]{e_mass_uqme.dat}
 		\caption{Execution times (in seconds) and residuals for the divide-and-conquer method  and cyclic reduction  applied to the example from Section~\ref{sec:massspring}.  
 		}
 		\label{tab:spring}
 	\end{table}

	\section{Conclusions}
	
	We have proposed novel Krylov subspace methods for updating the solution of continuous-time algebraic Riccati equations and unilateral quadratic matrix equations whose coefficients are subject to low-rank modifications.  We have provided theoretical insights into the low-rank and stability properties of the solutions to the involved correction equations. This has led us to design novel divide-and-conquer methods for quadratic equations with large-scale coefficients featuring hierarchical low-rank structures. Our methods have linear polylogarithmic complexity and often outperform existing techniques, sometimes significantly. The applications highlighted in this work include quasi-birth--death processes and damped mass-spring systems.

	\textbf{Acknowledgements.} During the larger part of the work on this article, the second author PK was affiliated with the Max Planck Institute for Dynamics of Complex Technical Systems Magdeburg.

	\bibliographystyle{plain}

\end{document}